\numberwithin{equation}{section}
\theoremstyle{plain}
\declaretheorem[name=Definition,numberwithin=section]{definition}
\declaretheorem[name=Proposition,sibling=definition]{proposition}
\declaretheorem[name=Theorem,sibling=definition]{theorem}
\newcommand{\Nat}{\mathbb{N}}
\newcommand{\dd}{\mathrm{d}}
\newcommand{\Ind}{\mathbb{I}}
\newcommand{\Prob}{\mathbb{P}}
\newcommand{\eqD}{\overset{\rm d}{\sim}}
\newcommand{\Bin}{\text{\rm Bin}}
\newcommand{\Nor}{\text{\rm N}}
\newcommand{\ox}{\otimes}
\newcommand{\bigox}{\bigotimes}
\begin{document}

\begin{frontmatter}
\title{Power-law decay of the degree-sequence probabilities of multiple random graphs
with application to graph isomorphism\thanksref{t1}}
\runtitle{Power-law decay of the degree-sequence probabilities of multiple random graphs}
\thankstext{t1}{This work has been partially funded
by CAPES, CNPq, and FAPERJ BBP grants.}

\begin{aug}
\author{\fnms{Jefferson Elbert} \snm{Sim\~oes}\thanksref{a}\ead[label=e1]{elbert@land.ufrj.br}},
\author{\fnms{Daniel R.} \snm{Figueiredo}\thanksref{a}\ead[label=e2]{daniel@land.ufrj.br}}
\and
\author{\fnms{Valmir C.} \snm{Barbosa}\thanksref{a}%
\ead[label=e3]{valmir@cos.ufrj.br}}

\runauthor{J. E. Sim\~oes et al.}

\affiliation[a]{Systems Engineering and Computer Science Program, COPPE, Federal University of Rio de Janeiro, Rio de Janeiro, Brazil}

\address{Systems Engineering and Computer Science Program, COPPE,\\
Federal University of Rio de Janeiro, Rio de Janeiro, Brazil,\\
\printead{e1,e2,e3}}

\end{aug}

\begin{abstract}
We consider events over the probability space generated by
the degree sequences of multiple independent Erdős-Rényi random graphs,
and consider an approximation probability space where
such degree sequences are deemed to be sequences of i.i.d.\@ random variables.
We show that, for any sequence of events with probabilities
asymptotically smaller than some power law in the approximation model,
the same upper bound also holds in the original model.
We accomplish this by extending an approximation framework
proposed in a seminal paper by McKay and Wormald.
Finally, as an example, we apply the developed framework
to bound the probability of isomorphism-related events
over multiple independent random graphs.
\end{abstract}

\begin{keyword}[class=MSC]
\kwd[Primary ]{05C80}
\end{keyword}

\begin{keyword}
\kwd{random graphs}
\kwd{degree sequences}
\kwd{power laws}
\kwd{asymptotic approximations}
\kwd{graph isomorphism}
\end{keyword}


\end{frontmatter}

\section{Introduction}
The Erdős-Rényi random graph model,
also known as the $G(n,p)$ model \citep{ERErdosRenyi,ERGilbert}
is the most traditional probabilistic model for graphs.
In this model, a graph over $n$ vertices is randomly generated
by adding edges independently between each vertex pair with probability $p(n)$.
Despite its inability to model real-world networks,
its simplicity and the consequent analytical tractability
have allowed thorough theoretical analysis \citep{Bollobas-RandomGraphs}
and applications such as percolation models \citep{MeanFieldFrozenPerc}
and graph theory via the probabilistic method \citep{AlonSpencer-ProbMethod}.

One of the toughest challenges in understanding
the overall structure of the $G(n,p)$ random graph
is obtaining a precise characterization of its degree sequence.
The main reason for this is that,
even though the degrees of any two specific nodes
are only mildly correlated (due to the possible edge between them),
it is still a nontrivial task to compose these correlations
into a manageable joint distribution for the degrees.

Most results on this matter address the distribution of
the $t$-th largest degree, for some $t(n)$ generally bounded.
More recently, though, a framework has been set by~\cite{DegreeSeqRandomGraph}
for approximating the degree sequence
by a sequence of independent random variables,
with tight bounds on the error of the probabilities of events
estimated by this approximation.
This framework has been successfully applied in several contexts:
for instance, \cite{ChvatalsConditionCannotHold}
use it to analyze the middle degree asymptotics of random graphs,
which relates to Chvátal's condition for Hamiltonian graphs,
and \cite{DegSeqRandBipartiteGraphs} applies a similar technique
to analyze degrees in a random bipartite graph model.

In this paper, we consider the problems of
comparing the degree sequences of multiple random graphs,
and of approximating these degree sequences by
corresponding sequences of independent random variables.
Our main result (\autoref{full-approx-2})
directly relates power-law decaying probabilities in the two models:
any event sequence that has probability $o(n^{-a})$ in the approximation model
also has probability $o(n^{-a})$ in the original degree sequence model.
To achieve this, we extend the framework in~\cite{DegreeSeqRandomGraph}
to establish a relationship between the degree sequences of all graphs
and the corresponding independent sequences
through a series of intermediate approximations.
The stepwise error bounds, formally established by \autoref{approx-steps-2},
lay down a roadmap for handling asymptotic probabilities
of properties that compare the structures of a set of graphs.

As an example, we apply \autoref{full-approx-2}
to the problem of graph isomorphism.
Not only is this problem an interesting theoretical problem in its own right,
but it also has implications in practical problems such as
network privacy and anonymization \citep{PrivacyAnonNetworks}
and computer vision \citep{GraphMatchingCompVision}.
In particular, we show that, for a certain range of model parameters,
in a set of $k$ random graphs, there will not be an isomorphic pair
with probability $1-\binom{k}{2}o(n^{-1/2})$,
and they will not be all isomorphic with probability $1-o(n^{-k/2})$.

This paper is structured as follows:
in section~\ref{related} we review the degree sequence approximation framework,
detailing its steps and stating the main results used.
We then proceed to extending the framework to multiple independent random graphs,
providing corresponding statements and proofs in section~\ref{results}.
Our sample application will be presented in section~\ref{application},
where we apply the framework to the problem of isomorphism,
after which we conclude with some final remarks in section~\ref{remarks}.

In this paper, we use the following definitions for
the Bachmann-Landau family of asymptotic notations.
For any two real functions $f$, $g$:
\begin{itemize}
\item $f = o(g) \iff g = \omega(f) \iff \lim_{n\to\infty}\left|\frac{f(n)}{g(n)}\right| = 0$;
\item $f = O(g) \iff g = \Omega(f) \iff \limsup_{n\to\infty}\left|\frac{f(n)}{g(n)}\right| < \infty$;
\item $f = \Theta(g) \iff f = O(g) \land g = O(f)$.
\end{itemize}
\section{Related work} \label{related}

\cite{DegreeSeqRandomGraph} have previously formalized,
under quite loose constraints, the very intuitive result that
the degree sequence of a $G(n,p)$ random graph is similar to
a sequence of independent random variables,
each having distribution $\Bin(n-1,p)$.
This result takes the form of a number of theorems and lemmas,
each performing one of four steps in the approximation process
that is detailed in this section.
Notation will be kept as similar as possible to the original work.

For some fixed $n\in\Nat$, take the set $I_n = \{0,\ldots,n-1\}^{n}$
equipped with the discrete $\sigma$-algebra as our measurable space.
Let $d = (d_1, \ldots, d_n)$ be some element in this space.
Also, let $p = p(n)\in(0,1)$, and denote $N = \binom{n}{2}$ and $q = 1-p$.

In the \emph{binomial model} $\mathcal{B}_{n,p}$, $d$ is distributed
as a sequence of $n$ independent $\Bin(n-1,p)$ random variables.
This can be achieved by evaluating $d$
under the probability measure $\Prob_{\mathcal{B}_{n,p}} = \Bin(n-1,p)^{\ox n}$.
We would like to assert that this model is similar
to the degree sequence of a $G(n,p)$ random graph.
We call this the \emph{degree sequence model} ($\mathcal{D}_{n,p}$),
and denote by $\Prob_{\mathcal{D}_{n,p}}$ the probability measure
under which $d$ has this distribution.
Note that the sum of degrees in any graph is necessarily even,
which means $d$ will take, with probability 1, values on the set
$E_n = \{d\in I_n~:~M(d)\text{ is even}\}$
(where $M = M(d) = \|d\|_1$ is the sum of the components of $d$).

The approximation process requires three additional models
(with corresponding probability measures) that will perform
a transition from the binomial model to the degree sequence model,
with two of them making $d$ acquire properties from the degree sequence model
that are not present in the binomial model, 
and the third one acting as a technical middleman.
The first model is the \emph{even-sum binomial model} ($\mathcal{E}_{n,p}$).
It ensures that $d$ indeed takes values in $E_n$ with probability 1.
To ensure minimum distortion between probability of elements of $E_n$,
this model is simply set to be
the restriction of the binomial model to the set $E_n$.\footnote{
That is, the corresponding probability measure is
the measure for the binomial model conditional to the event $E_n$,
evaluated only on the events in $E_n$.}
Then, the \emph{weighted even-sum binomial model} ($\mathcal{E}'_{n,p}$)
ensures the stronger property that $M$ has the same distribution
as it does under the degree sequence model
(namely, that $M/2$ is distributed as $\Bin(N,p)$).
To insert as little interference as possible
into the relative probabilities of any two points in $E_n$,
the probabilities of all points $E_n$ are rescaled (or \emph{reweighted})
uniformly on each set $S_m = \{d\in E_n~:~M(d) = m\}$,
to make these sets have the desired probability.

To perform the bridge between $\mathcal{E}'_{n,p}$ and $\mathcal{E}_{n,p}$,
they have introduced the \emph{integrated model} $\mathcal{I}_{n,p}$,
which is essentially a ``noisy'' version of
the even-sum model $\mathcal{E}_{n,p}$.
The model $\mathcal{I}_{n,p}$ is obtained from $\mathcal{E}_{n,p}$
by switching from a fixed parameter $p$
to a random parameter $p'$ that quickly concentrates around $p$.
More specifically, $p'$ must be distributed as a truncated normal variable,
with expected value $p$, variance $pq/2N$,
and restricted to the unit interval.

We can informally summarize the approximation scheme as follows:
$$\Prob_{\mathcal{B}_{n,p}} \,\approx\, \Prob_{\mathcal{E}_{n,p}}
\,\approx\, \Prob_{\mathcal{I}_{n,p}} \,\approx\,
\Prob_{\mathcal{E}'_{n,p}} \,\approx\, \Prob_{\mathcal{D}_{n,p}}.$$

Now, for these approximations to work,
it is necessary for $p(n)$ to lie in a ``good behavior range'',
in which case $p = p(n)$ is said to be \emph{acceptable}.
The last approximation, in particular, is hard to tighten in general,
so the necessary conditions for this approximation to work
are brought into the definition of an acceptable function:

\begin{definition}
A function $p = p(n)$ is acceptable if the following conditions hold:
\begin{enumerate}
\item $pqN = \omega(n)\log n$;
\item there is a set $R_p(n) \subset E_n$ and a real function $\delta(n) = o(1)$
such that:
\begin{enumerate}
\item $\Prob_{\mathcal{D}_{n,p}}(R_p(n)), \Prob_{\mathcal{E}_{n,p}}(R_p(n))
= 1 - n^{-\omega(n)}$;
\item for every $d\in R_p(n)$, there is some $\delta_d$ such that
$|\delta_d| \leq \delta(n)$ and
$$\frac{\Prob_{\mathcal{D}_{n,p}}(d)}{\Prob_{\mathcal{E}'_{n,p}}(d)} =
\exp\left\{\frac{1}{4}\left(1-\frac{\gamma_2^2}{\lambda^2(1-\lambda)^2}\right)\right\}\cdot\exp\{\delta_d\},$$
where $\lambda(d) = M(d)/2N$ and $\gamma_2(d) = (n-1)^{-2} \sum_{i=1}^n(d_i-M(d))^2$.
\end{enumerate}
\end{enumerate}
\end{definition}

The second condition in this definition requires
a set $R_p(n)$ to exist in our sample space $E_n$,
with very large probability in $\mathcal{D}_{n,p}$ and $\mathcal{E}_{n,p}$
(the probability of its complement in both models
vanishes faster than any standard exponential),
in which the models $\mathcal{D}_{n,p}$ and $\mathcal{E}'_{n,p}$
uniformly agree to a ratio that approaches 1.
This condition is required for the proofs to be carried out,
though it has been conjectured by McKay and Wormald
that condition 1 in the definition is sufficient for $p(n)$ to be acceptable ---
to the best of our knowledge, this conjecture is still open.
For our purposes, they have identified an interesting regime for $p(n)$
in which these conditions hold:

\begin{theorem}
$p(n)$ is acceptable whenever $\omega(n)\log n/n^2 \leq pq \leq o(n^{-1/2})$.
\end{theorem}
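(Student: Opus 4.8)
The plan is to verify the two conditions in the definition of acceptability directly. By the symmetry $p\leftrightarrow q$ --- complementing every graph carries $G(n,p)$ to $G(n,q)$, sends $d_i$ to $(n-1)-d_i$, and turns all five models into their $q$-counterparts while leaving $pq$, $N$ and $\lambda(1-\lambda)$ unchanged --- we may assume $p\le\tfrac12$, so that $p=pq/q\le 2pq=o(n^{-1/2})$; we are then in the sparse regime, with average degree $(n-1)p=o(\sqrt n)$. Condition~1 is immediate: since $N=\binom n2\ge n^2/4$, the hypothesis $pq\ge\omega(n)\log n/n^2$ gives $pqN\ge\tfrac14\,\omega(n)\log n$, which is again of the form $\omega(n)\log n$.

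For condition~2 I would take $R_p(n)$ to be the set of $d\in E_n$ on which every statistic entering the asymptotic enumeration of graphs with a prescribed degree sequence is tightly controlled: $|M(d)-2Np|\le t_M$, $\max_i|d_i-(n-1)p|\le t_d$, and $\gamma_2(d)$ within $t_\gamma$ of its mean, for thresholds $t_M,t_d,t_\gamma$ fixed below. Part~2(a) --- that $\Prob_{\mathcal{D}_{n,p}}$ and $\Prob_{\mathcal{E}_{n,p}}$ each assign $R_p(n)$ probability $1-n^{-\omega(n)}$ --- is a matter of concentration. Under $\mathcal{E}_{n,p}$ the coordinates of $d$ are independent $\Bin(n-1,p)$ variables conditioned only on $M$ being even, and since $\Prob_{\mathcal{B}_{n,p}}(E_n)=\tfrac12\bigl(1+(1-2p)^{2N}\bigr)\to\tfrac12$, any event of small $\mathcal{B}_{n,p}$-probability has at most twice that $\mathcal{E}_{n,p}$-probability; so Chernoff bounds for $M/2\sim\Bin(N,p)$, for each $d_i\sim\Bin(n-1,p)$ with a union bound over $i$ (plus one bound against the super-concentrated tail of $\max_i d_i$), and for the empirical second moment settle this case. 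Under $\mathcal{D}_{n,p}$, $M/2\sim\Bin(N,p)$ exactly and each $d_i\sim\Bin(n-1,p)$ marginally, so the same estimates handle the first two events; the one delicate point is the second-moment statistic, where the mild negative dependence among the degrees of $G(n,p)$ must be absorbed, e.g.\ by a bounded-differences argument or by the McKay--Wormald comparison of $\mathcal{D}_{n,p}$ with $\mathcal{B}_{n,p}$ on suitable neighbourhoods. Finally one fixes the thresholds inside the window the hypothesis leaves open: e.g.\ $t_M=\sqrt{Npq}\,h(n)$ with $\sqrt{\log n}\ll h(n)\ll\sqrt{Npq}$ --- such $h$ exists precisely because $Npq=\omega(n)\log n$ --- so that the associated tail is $e^{-\Theta(h(n)^2)}=n^{-\omega(n)}$ while at the same time $t_M=o(Np)$ keeps $R_p(n)$ inside the range in which the enumeration formula is valid; $t_d$ and $t_\gamma$ are chosen in the same spirit.

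Part~2(b) is the heart of the argument. Fix $d\in R_p(n)$, write $m=M(d)$, and factor
$$\frac{\Prob_{\mathcal{D}_{n,p}}(d)}{\Prob_{\mathcal{E}'_{n,p}}(d)}=\frac{\Prob_{\mathcal{D}_{n,p}}(d)}{\Prob_{\mathcal{B}_{n,p}}(d)}\cdot\frac{\Prob_{\mathcal{B}_{n,p}}(d)}{\Prob_{\mathcal{E}'_{n,p}}(d)}.$$
The second factor is explicit from the construction of $\mathcal{E}'_{n,p}$: restricting $\mathcal{B}_{n,p}$ to $E_n$ and then rescaling uniformly on each $S_m$ so as to match the law of $M$ yields $\Prob_{\mathcal{E}'_{n,p}}(d)=\Prob_{\mathcal{B}_{n,p}}(d)\cdot\Prob_{\mathcal{D}_{n,p}}(S_m)/\Prob_{\mathcal{B}_{n,p}}(S_m)$, which depends on $d$ only through $m$. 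For the first factor, $\Prob_{\mathcal{D}_{n,p}}(d)=g(d)\,p^{m/2}q^{N-m/2}$ with $g(d)$ the number of labelled graphs of degree sequence $d$, while $\Prob_{\mathcal{B}_{n,p}}(d)=\bigl(\prod_i\binom{n-1}{d_i}\bigr)p^{m}q^{2N-m}$. Inserting the McKay--Wormald sparse asymptotic formula for $g(d)$, valid uniformly over degree sequences of the type cut out by $R_p(n)$ (average degree $o(\sqrt n)$, controlled $\max_i d_i$), and performing the cancellations that constitute the content of their computation --- the combinatorial main term of $g(d)$ against $\prod_i\binom{n-1}{d_i}$, and the residual $m$-only factor against the one coming from $\mathcal{E}'_{n,p}$, which by construction reproduces the degree-sum law and is uniform within each $S_m$ --- what is left is exactly $\exp\bigl\{\tfrac14\bigl(1-\gamma_2^2/(\lambda^2(1-\lambda)^2)\bigr)\bigr\}\cdot\exp\{\delta_d\}$, the residual $\delta_d$ collecting the relative errors of Stirling's formula and of the enumeration estimate. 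Since on $R_p(n)$ every parameter that governs those errors ($m/N$, $\max_i d_i$, $\gamma_2$) lives in a single shrinking window, the errors are $o(1)$ \emph{uniformly}, and one sets $\delta(n)=\sup_{d\in R_p(n)}|\delta_d|=o(1)$.

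I expect 2(b) to be the real obstacle: it amounts to re-running the full McKay--Wormald sparse enumeration analysis, and forcing $\delta_d$ to be uniformly $o(1)$ is what pins down how tightly $R_p(n)$ must constrain $M$, $\max_i d_i$ and the second moment --- which in turn makes 2(a) nontrivial, since those same tight windows must still carry all but an $n^{-\omega(n)}$ fraction of the mass in \emph{both} $\mathcal{D}_{n,p}$ and $\mathcal{E}_{n,p}$, and on the $\mathcal{D}_{n,p}$ side this is where the dependence between degrees has to be confronted. If one is willing instead to cite the McKay--Wormald enumeration theorem as a black box, the whole proof collapses to the bookkeeping in 2(b) plus the concentration estimates of 2(a).
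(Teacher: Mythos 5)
The first thing to note is that the paper contains no proof of this theorem: it is imported verbatim from McKay and Wormald, for whom the window $\omega(n)\log n/n^2\le pq\le o(n^{-1/2})$ is precisely the regime in which their asymptotic enumeration of graphs with degrees $o(n^{1/2})$ is available. There is therefore no in-paper argument to measure you against; the relevant benchmark is the cited source, and your outline reconstructs its architecture faithfully. The complementation symmetry reducing to $p\le\tfrac12$, the one-line verification of condition 1 from $N=\Theta(n^2)$, the transfer of $\mathcal{B}_{n,p}$-tail bounds to $\mathcal{E}_{n,p}$ via $\Prob_{\mathcal{B}_{n,p}}(E_n)=\tfrac12(1+(1-2p)^{2N})\to\tfrac12$, the identification of $\gamma_2$-concentration under $\mathcal{D}_{n,p}$ (where degrees are dependent) as the delicate part of 2(a), and the identities $\Prob_{\mathcal{E}'_{n,p}}(d)=\Prob_{\mathcal{B}_{n,p}}(d)\,\Prob_{\mathcal{D}_{n,p}}(S_m)/\Prob_{\mathcal{B}_{n,p}}(S_m)$ and $\Prob_{\mathcal{D}_{n,p}}(d)=g(d)\,p^{m/2}q^{N-m/2}$ are all correct and are exactly the right reductions.

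Judged as a proof rather than a roadmap, however, the proposal has a genuine gap, and you name it yourself: step 2(b) \emph{is} the theorem. The claim that inserting the sparse enumeration formula for $g(d)$ and ``performing the cancellations'' leaves exactly $\exp\{\tfrac14(1-\gamma_2^2/\lambda^2(1-\lambda)^2)\}\exp\{\delta_d\}$ with $\delta_d$ uniformly $o(1)$ is the entire analytic content of the result; nothing in the proposal derives it, so what you have actually established is a reduction of acceptability to the enumeration theorem plus concentration estimates --- legitimate, and in effect the same black-box treatment the paper itself adopts, but it should be labelled as a reduction rather than a proof. A secondary calibration issue: the required failure probability $n^{-\omega(n)}=e^{-\omega(n)\log n}$ is of order $e^{-\Theta(pqN)}$ at the bottom of the admissible range (already $\Prob_{\mathcal{E}_{n,p}}(M=0)\approx e^{-2Np}$ is of that order, and the all-zero sequence must be excluded from $R_p(n)$ since $\lambda=0$ there), so your window $t_M=h(n)\sqrt{Npq}$ with $\sqrt{\log n}\ll h(n)\ll\sqrt{Npq}$ yields tails $e^{-\Theta(h(n)^2)}$ that are too large when $Npq$ is only barely $\omega(n)\log n$; the deviations defining $R_p(n)$ must be taken at constant relative width (forcing $h(n)=\Theta(\sqrt{Npq})$), and one must then verify separately that constant-relative control of $m$, $\max_i d_i$ and $\gamma_2$ still suffices for the uniform $o(1)$ error in the enumeration formula. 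That tension between 2(a) and 2(b) is exactly where the source's construction of $R_p(n)$ does its work.
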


The execution of this approximation scheme
has been broken down into a number of pieces with various levels of complexity,
so to fit different possibilities of applications.
In our particular case, we would like to ensure that
this scheme is well-suited for approximating probabilities
that vanish faster than power laws in $n$.
For this purpose, we extract the following results
from~\cite{DegreeSeqRandomGraph}, condensed in a single theorem.

\begin{theorem}
\label{approx-steps-1}
Let 
$\phi(x;\mu,\sigma^2)$ be the density function of the normal distribution,
and $V_{n,p} = \int_0^1 \phi(x;p,pq/2N)\dd x$.
Then the following statements hold:
\begin{enumerate}

\item For any event $A_n \subseteq E_n$,
$$\Prob_{\mathcal{E}_{n,p}}(A_n) =
\frac{2\Prob_{\mathcal{B}_{n,p}}(A_n)}{1+(q-p)^{2N}};$$
\label{bp-ep}

\item For any event $A_n \subseteq E_n$,
$$\Prob_{\mathcal{I}_{n,p}}(A_n) =
\frac{1}{V_{n,p}}\int_0^1 \phi(x;p,pq/2N)\Prob_{\mathcal{E}_{n,x}}(A_n)\dd x;$$
\label{ep-ip}

\item If $pqN\to\infty$ and $y = y(n) = o(\sqrt[6]{pqN})$, then
$$\Prob_{\mathcal{I}_{n,p}}(d) =
\Prob_{\mathcal{E}_{n,p}'}(d)
\left(1+O\left(\frac{1+|y|^3}{\sqrt{pqN}}\right)\right)$$
uniformly over $\{d\in E_n~:~|M(d)-2Np|\leq 2y\sqrt{Npq}\}$;
\label{ip-e'p}

\item If $\omega(n)\log n/n^2 \leq pq \leq o(n^{-1/2})$, then
there are sets $R_p(n),R'_p(n) \subseteq E_n$
and a real function $\delta(n) = o(1)$ such that:
\begin{enumerate}
\item $\Prob_{\mathcal{D}_{n,p}}(R_p(n)), \Prob_{\mathcal{D}_{n,p}}(R'_p(n))
= 1 - n^{-\omega(n)}$;
\item in $R'_p(n)$, $\gamma_2 = \lambda(1-\lambda)(1+o(1))$;
\item for every $d\in R_p(n)$, there is some $\delta_d$ such that
$|\delta_d| \leq \delta(n)$ and
$$\frac{\Prob_{\mathcal{D}_{n,p}}(d)}{\Prob_{\mathcal{E}'_{n,p}}(d)} =
\exp\left\{\frac{1}{4}\left(1-\frac{\gamma_2^2}{\lambda^2(1-\lambda)^2}\right)
\right\}\cdot\exp\{\delta_d\}.$$
\end{enumerate}
\label{e'p-dp}

\end{enumerate}
\end{theorem}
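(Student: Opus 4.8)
The plan is to note that this statement merely repackages material from \cite{DegreeSeqRandomGraph}, so the work splits into deriving the two elementary identities~\ref{bp-ep}--\ref{ep-ip} directly from the definitions of the models, and matching the two substantive estimates~\ref{ip-e'p}--\ref{e'p-dp} to the corresponding lemmas in the source while checking that the hypotheses and uniformity ranges stated here are exactly the ones under which those lemmas hold.

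For~\ref{bp-ep}, recall that $\mathcal{E}_{n,p}$ is by definition $\mathcal{B}_{n,p}$ conditioned on $E_n$, so for $A_n\subseteq E_n$ one has $\Prob_{\mathcal{E}_{n,p}}(A_n)=\Prob_{\mathcal{B}_{n,p}}(A_n)/\Prob_{\mathcal{B}_{n,p}}(E_n)$. Under $\mathcal{B}_{n,p}$ the quantity $M(d)$ is a sum of $n$ independent $\Bin(n-1,p)$ variables, hence distributed as $\Bin(2N,p)$; averaging its probability generating function at $\pm1$ gives $\Prob_{\mathcal{B}_{n,p}}(E_n)=\tfrac12\bigl((p+q)^{2N}+(q-p)^{2N}\bigr)=\tfrac12\bigl(1+(q-p)^{2N}\bigr)$, and substituting yields the claimed formula. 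For~\ref{ep-ip}, this is just the law of total probability applied to the two-stage construction of $\mathcal{I}_{n,p}$: draw $p'$ with density $V_{n,p}^{-1}\phi(x;p,pq/2N)\Ind_{[0,1]}(x)$, then draw $d$ from $\mathcal{E}_{n,p'}$. Both items are therefore immediate.

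For~\ref{ip-e'p} I would quote verbatim the local-limit estimate of \cite{DegreeSeqRandomGraph} comparing $\mathcal{I}_{n,p}$ with $\mathcal{E}'_{n,p}$. Its proof there is a Laplace-type analysis of the integral in~\ref{ep-ip}: near $x=p$ the integrand is approximated by a Gaussian matching the variance $pq/2N$, and an Edgeworth-style expansion of $\Prob_{\mathcal{E}_{n,x}}(d)$ in $x-p$ produces the correction of order $(1+|y|^3)/\sqrt{pqN}$, uniformly on the slab $|M(d)-2Np|\le 2y\sqrt{Npq}$ provided $y=o(\sqrt[6]{pqN})$ and $pqN\to\infty$. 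Nothing beyond transcription into our notation is required.

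Finally, \ref{e'p-dp} is assembled from three ingredients in \cite{DegreeSeqRandomGraph}. The theorem quoted above shows that $p(n)$ is acceptable for $\omega(n)\log n/n^2\le pq\le o(n^{-1/2})$, and the definition of acceptability then supplies $R_p(n)$ together with clause (c) (the uniform formula for $\Prob_{\mathcal{D}_{n,p}}(d)/\Prob_{\mathcal{E}'_{n,p}}(d)$) and the $R_p(n)$ half of (a). A separate concentration estimate for the normalized degree variance, present in the source as part of the analysis establishing acceptability, yields a set $R'_p(n)$ of $\mathcal{D}_{n,p}$-probability $1-n^{-\omega(n)}$ on which $\gamma_2=\lambda(1-\lambda)(1+o(1))$, giving (b) and the $R'_p(n)$ half of (a). I expect the main obstacle to be bookkeeping rather than mathematics: one must check that each quoted statement is invoked under precisely its stated hypotheses, that every ``$1-n^{-\omega(n)}$'' bound here refers to the measure actually named (one clause in the definition of acceptability is phrased for $\mathcal{E}_{n,p}$ rather than $\mathcal{D}_{n,p}$), and that the exponential prefactor $\exp\{\tfrac14(1-\gamma_2^2/(\lambda^2(1-\lambda)^2))\}$ is reproduced with identical constants.
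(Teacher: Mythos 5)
Your proposal is correct and follows essentially the same route as the paper, which likewise proves this theorem by transcribing corollary 4.3/theorem 4.2, lemma 2.4, theorem 3.6, and corollary 3.5 plus the acceptability definition from \cite{DegreeSeqRandomGraph} into the present notation. The only (harmless) divergence is that you derive item~\ref{bp-ep} from scratch via the parity identity $\Prob_{\mathcal{B}_{n,p}}(E_n)=\tfrac12\bigl(1+(q-p)^{2N}\bigr)$ for a $\Bin(2N,p)$ sum instead of citing the source's corollary, and your bookkeeping caveat about which measure ($\mathcal{E}_{n,p}$ versus $\mathcal{D}_{n,p}$) appears in clause (a) correctly identifies the one point where the acceptability definition alone does not literally give the stated form.
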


\begin{proof}
All results used in this proof
have been extracted from \cite{DegreeSeqRandomGraph},
to which we refer the reader for notation and statements.
Statement~\ref{bp-ep} is a particular case of corollary 4.3
taking $f=\Ind_{A_n}$ the indicator function of the event $A_n$,
simplified by theorem 4.2 and the observation that,
since $f = 0$ in $I_n\setminus E_n$, $f = \tilde{f}$.
Statement~\ref{ep-ip} is a rewriting of lemma 2.4,
consequence of the construction of $\Prob_{\mathcal{I}_{n,p}}$
from $\Prob_{\mathcal{E}_{n,p}}$
and an application of the law of total probability ---
we note that, for $x \in [0,1]$, $\phi(x;p,pq/2N)/V_{n,p}$
is the density function of the random parameter $p'$ used in the construction.
Statement~\ref{ip-e'p} simply restates theorem 3.6.
Statement~\ref{e'p-dp} comes from the definition of acceptability
and corollary 3.5, noting that the hypothesis implies $p(n)$ is acceptable.
\end{proof}

These properties of good approximation provided by \autoref{approx-steps-1}
suffice for our purposes, as they allow us to derive
the following relationship between the end models
$\mathcal{B}_{n,p}$ and $\mathcal{D}_{n,p}$.

\begin{theorem}
\label{full-approx-1}
Let $\{A_n\}_{n\in\Nat}$ be a sequence of events in $E_n$,
and assume $p$ satisfies $\omega(\log n/n) \leq p \leq o(n^{-1/2})$.
For any fixed $a > 0$,
$\Prob_{\mathcal{B}_{n,p}}(A_n) = o(n^{-a})$ implies
$\Prob_{\mathcal{D}_{n,p}}(A_n) = o(n^{-a})$.
\end{theorem}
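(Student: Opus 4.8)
The plan is to push the power‑law bound forward along the approximation chain $\mathcal{B}_{n,p}\rightsquigarrow\mathcal{E}_{n,p}\rightsquigarrow\mathcal{I}_{n,p}\rightsquigarrow\mathcal{E}'_{n,p}\rightsquigarrow\mathcal{D}_{n,p}$ using the four parts of \autoref{approx-steps-1}, and to absorb into $o(n^{-a})$ every loss that is either a bounded multiplicative constant or a super‑polynomially small additive term. We use repeatedly that $pqN=\omega(n)\log n$: this gives $(q-p)^{2N}=(1-2p)^{2N}\le e^{-4Np}=n^{-\omega(n)}$, and that the normalising constant $V_{n,p}=\int_0^1\phi(x;p,pq/2N)\,\dd x$ equals $1-n^{-\omega(n)}$, its Gaussian having mean $p$ at $\Theta(\sqrt{Np})=\omega(1)$ standard deviations from $0$ (and farther from $1$).

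Three of the four arrows are routine. By statement~\ref{bp-ep}, $\Prob_{\mathcal{E}_{n,p}}(A_n)\le 2\Prob_{\mathcal{B}_{n,p}}(A_n)=o(n^{-a})$. For $\mathcal{I}\rightsquigarrow\mathcal{E}'$, fix $y=y(n)$ with $\sqrt{\log n}=o(y)$ and $y=o((pqN)^{1/6})$ (possible since $pqN=\omega(n\log n)$), and let $T_n=\{d:|M(d)-2Np|\le 2y\sqrt{Npq}\}$; the error in statement~\ref{ip-e'p} is then $o(1)$, so $\Prob_{\mathcal{E}'_{n,p}}(d)=(1+o(1))\Prob_{\mathcal{I}_{n,p}}(d)$ uniformly on $T_n$, while a Chernoff bound for the $\Bin(N,p)$‑distributed $M/2$ gives $\Prob_{\mathcal{E}'_{n,p}}(T_n^{\,c})=e^{-\Omega(y^2)}$; hence $\Prob_{\mathcal{E}'_{n,p}}(A_n)\le(1+o(1))\Prob_{\mathcal{I}_{n,p}}(A_n)+e^{-\Omega(y^2)}$. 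For $\mathcal{E}'\rightsquigarrow\mathcal{D}$, statement~\ref{e'p-dp} shows that on $R_p(n)\cap R'_p(n)$ --- a set of $\mathcal{D}_{n,p}$‑probability $1-n^{-\omega(n)}$ --- one has $\gamma_2=\lambda(1-\lambda)(1+o(1))$, so the exponential prefactor is $e^{o(1)}$ and $\Prob_{\mathcal{D}_{n,p}}(d)=(1+o(1))\Prob_{\mathcal{E}'_{n,p}}(d)$ there; splitting $A_n$ over this set yields $\Prob_{\mathcal{D}_{n,p}}(A_n)\le(1+o(1))\Prob_{\mathcal{E}'_{n,p}}(A_n)+n^{-\omega(n)}$. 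Chaining the three, everything reduces to proving $\Prob_{\mathcal{I}_{n,p}}(A_n)=o(n^{-a})$.

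The surviving arrow $\mathcal{E}\rightsquigarrow\mathcal{I}$ is the crux. By statement~\ref{ep-ip}, with $\sigma:=\sqrt{pq/2N}$,
$$\Prob_{\mathcal{I}_{n,p}}(A_n)=\frac{1}{V_{n,p}}\int_0^1\phi(x;p,\sigma^2)\,\Prob_{\mathcal{E}_{n,x}}(A_n)\,\dd x.$$
Split the integral at $|x-p|=\rho:=\sigma\log n$. For $|x-p|>\rho$, bounding $\Prob_{\mathcal{E}_{n,x}}(A_n)\le 1$ gives a contribution $\le V_{n,p}^{-1}\int_{|x-p|>\rho}\phi(x;p,\sigma^2)\,\dd x=e^{-\Omega((\log n)^2)}=o(n^{-a})$. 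For $|x-p|\le\rho$, note $\rho/p=O(\log n/\sqrt{Np})=o(1)$, so $x\ge(1-o(1))p$ and $(1-2x)^{2N}=n^{-\omega(n)}$; hence statement~\ref{bp-ep} at parameter $x$ gives $\Prob_{\mathcal{E}_{n,x}}(A_n)\le 2\Prob_{\mathcal{B}_{n,x}}(A_n)$. Since $\Prob_{\mathcal{B}_{n,x}}(d)=\binom{n-1}{d_1}\cdots\binom{n-1}{d_n}\,x^{M(d)}(1-x)^{2N-M(d)}$, interchanging the sum over $A_n$ with the integral reduces the theorem to
$$\sum_{d\in A_n}\Prob_{\mathcal{B}_{n,p}}(d)\,r\!\left(M(d)\right)=o(n^{-a}),$$
where $r(m)=\dfrac{\int_{|x-p|\le\rho}\phi(x;p,\sigma^2)\,x^m(1-x)^{2N-m}\,\dd x}{p^m(1-p)^{2N-m}}$ depends on $d$ only through $M(d)$.

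Controlling this tilted sum is the main obstacle, and I expect it to need the full hypothesis $\omega(\log n/n)\le p\le o(n^{-1/2})$ together with sharp moderate‑deviation estimates for binomials. A Laplace evaluation of the numerator gives $r(m)=O\!\bigl(\exp\!\bigl((m-2Np)^2/8Npq\bigr)\bigr)$, so $r\equiv 1+o(1)$ on the level sets $S_m=\{d:M(d)=m\}$ with $|m-2Np|=o(\sqrt{Npq})$, but $r(m)$ is polynomially large on the far level sets, so $r$ cannot simply be pulled out of the sum. The natural remedy is to group $A_n$ by level sets and bound, for each $m$, $\sum_{d\in A_n\cap S_m}\Prob_{\mathcal{B}_{n,p}}(d)\,r(m)\le r(m)\min\{\Prob_{\mathcal{B}_{n,p}}(A_n),\Prob_{\mathcal{B}_{n,p}}(S_m)\}$ and sum over $m$, using that $\Prob_{\mathcal{B}_{n,p}}(S_m)$ decays like $\exp(-(m-2Np)^2/4Npq)$ --- faster than $r(m)$ grows --- so the far level sets are negligible, while for $m$ with $r(m)\le 1/\Prob_{\mathcal{B}_{n,p}}(A_n)$ one is left with a bounded multiple of $\Prob_{\mathcal{B}_{n,p}}(A_n)$. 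The delicate point is the book‑keeping at the interface of these two regimes: showing that the accumulated total stays within $o(n^{-a})$, rather than only within $o(n^{-a+\varepsilon})$ for every $\varepsilon>0$, is exactly where the argument must do its real work, and where I would expect the proof to require the most care.
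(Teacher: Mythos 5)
Your handling of three of the four arrows ($\mathcal{B}_{n,p}\to\mathcal{E}_{n,p}$, $\mathcal{I}_{n,p}\to\mathcal{E}'_{n,p}$, $\mathcal{E}'_{n,p}\to\mathcal{D}_{n,p}$) is correct and coincides in substance with the paper's intended proof, which is the $k=1$ specialization of the proof of \autoref{full-approx-2}. The problem is the arrow $\mathcal{E}_{n,p}\to\mathcal{I}_{n,p}$, which you rightly call the crux and then leave open. After truncating to $|x-p|\le\rho$ you expand $\Prob_{\mathcal{B}_{n,x}}(A_n)$ over $d\in A_n$ and reduce to the tilted sum $\sum_{d\in A_n}\Prob_{\mathcal{B}_{n,p}}(d)\,r(M(d))$ with $r(m)=O(\exp((m-2Np)^2/8Npq))$, proposing to control it via $r(m)\min\{\Prob_{\mathcal{B}_{n,p}}(A_n),\Prob_{\mathcal{B}_{n,p}}(S_m)\}$. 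This is not a book-keeping issue that more care would fix: the only constraints you impose on the mass of $A_n$ are its total, $\pi:=\Prob_{\mathcal{B}_{n,p}}(A_n)$, and the cap $\Prob_{\mathcal{B}_{n,p}}(A_n\cap S_m)\le\Prob_{\mathcal{B}_{n,p}}(S_m)$. An adversarial placement of that mass on the outermost level sets of the window (fill $S_m$ greedily from large $|m-2Np|$ inward until the total reaches $\pi$) makes the tilted sum of order $\sqrt{\pi}$ up to polylogarithmic factors, since $r(m)\Prob_{\mathcal{B}_{n,p}}(S_m)\approx\exp(-(m-2Np)^2/8Npq)$ while the cumulative mass beyond level $t=(m-2Np)/\sqrt{Npq}$ is $\approx e^{-t^2/4}$. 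So from the stated hypothesis alone this route yields only $o(n^{-a/2})$, not $o(n^{-a})$; some additional input about $A_n$ or about $\Prob_{\mathcal{B}_{n,x}}(A_n)$ for $x\neq p$ is unavoidable, and your proposal does not supply it.

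The paper closes this step without ever decomposing over $d$. It takes the central window $C=[p(1-\delta),p(1+\delta)]$ with $\delta=\sqrt{cq\log n/(Np)}$ for a constant $c>a$ (chosen so that the Gaussian mass outside $C$ is exactly $n^{-c}=o(n^{-a})$), notes that $\delta=o(p)$ so every $x\in C$ again satisfies $\omega(\log n/n)\le x\le o(n^{-1/2})$, and bounds the central integral by $\max_{x\in C}\Prob_{\mathcal{E}_{n,x}}(A_n)$, the maximum being attained at some sequence $y(n)\in C$ by continuity and compactness; Step 1 applied at parameter $y(n)$ then gives $o(n^{-a})$. The extra input this uses --- that the hypothesis $\Prob_{\mathcal{B}_{n,x}}(A_n)=o(n^{-a})$ holds for every admissible parameter sequence in the window, not only the nominal $p$ (which is how the hypothesis is actually verified in the paper's application) --- is precisely what your tilted-sum computation is missing. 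Replacing your expansion over $d$ by this maximum-over-the-window device is the shortest repair.
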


Even though \autoref{full-approx-1} follows from the pieces of the approximation framework,
it was not proved at the occasion.
For brevity, we will not provide a proof for it, either,
though we note that each step in such proof is a simplified version
of the corresponding step in the proof of \autoref{full-approx-2},
which considers multiple random graphs, to be presented in the next section.

\section{Results} \label{results}
In several domains, we can identify problems that can be reduced to
understanding whether the structures of a set of given graphs are similar.
In this work, we consider the situation where
these graphs are instances of the $G(n,p)$ model,
with the same size but possibly with different values of $p$ ---
that is, a set of $k$ random graphs $G_1,\ldots,G_k$,
with $G_i$ distributed as $G(n,p_i)$ for some $p_i \in (0,1)$.
We also assume that these instances are independent.

Naturally, we would like to compare the degree sequences of these graphs,
as such comparison can be used as a proxy for more complicated properties.
Intuitively, it would be trivial that, since the multiple degree sequences are independent and
each of them can be individually approximated by i.i.d. sequences
with small errors on the corresponding probabilities of events,
the joint approximation of all degree sequences
should similarly yield a small error as well.
However, we find it essential that
this extension of the single-graph case be obtained formally.
As we see in what follows, even though such extension is indeed possible,
achieving it is far from trivial.

Before we proceed, let us introduce some notation.
For $\vec{p} = (p_1,\ldots,p_k)\in(0,1)^k$, denote by $\Prob_{\mathcal{B}_{n,\vec{p}}}$
the probability measure $\bigox_{i\in[k]}\Prob_{\mathcal{B}_{n,p_i}}$
over $I_n^k$ --- and similarly for measures in other models, over $E_n^k$.
Our goal is to perform the following approximation scheme:
$$\Prob_{\mathcal{B}_{n,\vec{p}}} \,\approx\, \Prob_{\mathcal{E}_{n,\vec{p}}}
\,\approx\, \Prob_{\mathcal{I}_{n,\vec{p}}} \,\approx\,
\Prob_{\mathcal{E}'_{n,\vec{p}}} \,\approx\, \Prob_{\mathcal{D}_{n,\vec{p}}}.$$

Let us stress that $\Prob_{\mathcal{D}_{n,\vec{p}}}$
is the joint distribution of the degree sequences of
mutually independent random graphs $G(n,p_1)$,\ldots,$G(n,p_k)$,
and $\Prob_{\mathcal{B}_{n,\vec{p}}}$ is the corresponding approximation
by $k$ independent sequences of i.i.d. random variables.

We will extend our notation further and write $\vec{q} = (q_1,\ldots,q_k)$ with $q_i = 1-p_i$,
and denote by $\vec{d} = (d_1,\ldots,d_k)$ some element of $I_n^k$.
Note that each coordinate $d_i$ of $\vec{d}$ is an integer sequence of length $n$.
We will also write $\lambda_i=\lambda_i(\vec{d})=M(d_i)/2N$
and $(\gamma_2)_i = (\gamma_2)_i(\vec{d}) = (n-1)^{-2}\sum_{j=1}^n((d_i)_j-M(d_i))^2$.

This allows us to state an extended version of \autoref{approx-steps-1} that holds for any $k \geq 1$:

\begin{theorem}
\label{approx-steps-2}
Let $\phi(x;\mu,\sigma^2)$ and $V_{n,p}$ be as in \autoref{approx-steps-1}.
Then the following statements hold:
\begin{enumerate}

\item For any event $A_n \subseteq E_n^k$,
$$\Prob_{\mathcal{E}_{n,\vec{p}}}(A_n) =
\frac{2^k\Prob_{\mathcal{B}_{n,\vec{p}}}(A_n)}
{\prod_{i\in[k]}[1+(q_i-p_i)^{2N}]};$$
\label{bp-ep2}

\item For any event $A_n \subseteq E_n^k$,
$$\Prob_{\mathcal{I}_{n,\vec{p}}}(A_n) =
\frac{1}{\prod_{i\in[k]}V_{n,p_i}}\int_{[0,1]^k}
\prod_{i\in[k]}\phi\left(x_i;p_i,\frac{p_iq_i}{2N}\right)
\Prob_{\mathcal{E}_{n,\vec{x}}}(A_n)\dd \vec{x},$$
where $\vec{x} = (x_1,\ldots,x_k)$.
\label{ep-ip2}

\item If $\min_{i\in[k]}\{p_iq_iN\}\to\infty$ and $y = y(n)$ is $o(\sqrt[6]{\max_{i\in[k]}\{p_iq_iN\}})$, then
$$\Prob_{\mathcal{I}_{n,\vec{p}}}(\vec{d}) =
\Prob_{\mathcal{E}'_{n,\vec{p}}}(\vec{d})
\left(1+\sum_{i\in[k]}O\left(\frac{1+|y|^3}{\sqrt{p_iq_iN}}\right)\right)$$
uniformly over $\{\vec{d}\in E_n^k~:~|M(d_i)-2Np_i|\leq 2y\sqrt{p_iq_iN}~\forall~i\in[k]\}$;
\label{ip-e'p2}

\item If $\omega(n)\log n/n^2 \leq p_iq_i \leq o(n^{-1/2})$ for each $i$, then
there are sets $S_{\vec{p}}(n)\subseteq E_n^k$ and $S'_{\vec{p}}(n) \subseteq E_n^k$
and a real function $\varepsilon(n) = o(1)$ such that:
\begin{enumerate}
\item $\Prob_{\mathcal{D}_{n,\vec{p}}}(S_{\vec{p}}(n)),
\Prob_{\mathcal{D}_{n,\vec{p}}}(S'_{\vec{p}}(n)) = 1 - n^{-\omega(n)}$;
\label{e'p-dp2-1}
\item in $S'_{\vec{p}}(n)$, $(\gamma_2)_i = \lambda_i(1-\lambda_i)(1+o(1))$ for each $i\in[k]$;
\label{e'p-dp2-2}
\item for every $\vec{d}\in S_{p,p'}(n)$,
there is some $\varepsilon_{\vec{d}}$ such that
$|\varepsilon_{\vec{d}}| \leq \varepsilon(n)$ and
\begin{align*}
\frac{\Prob_{\mathcal{D}_{n,\vec{p}}}(\vec{d})}
{\Prob_{\mathcal{E}'_{n,\vec{p}}}(\vec{d})} &=
\exp\left\{\frac{1}{4}\left(k-\sum_{i\in[k]}\frac{(\gamma_2)_i^2}{\lambda_i^2(1-\lambda_i)^2}
\right)\right\} 
\cdot\exp\{\varepsilon_{\vec{d}}\}.\end{align*}
\label{e'p-dp2-3}
\end{enumerate}
\label{e'p-dp2}

\end{enumerate}
\end{theorem}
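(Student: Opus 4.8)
The plan is to reduce each of the four statements to its single-graph counterpart in \autoref{approx-steps-1}, exploiting that every model in the multi-graph scheme is, by construction, the tensor product of its single-graph versions: $\Prob_{\mathcal{B}_{n,\vec p}}=\bigotimes_{i\in[k]}\Prob_{\mathcal{B}_{n,p_i}}$, and likewise for $\mathcal{E}$, $\mathcal{I}$, $\mathcal{E}'$ and $\mathcal{D}$. The recurring mechanism is that a per-coordinate estimate is multiplied over $i\in[k]$; because $k$ is a fixed constant, the accumulated distortion stays in the same asymptotic class: a product of $k$ factors of the form $1-n^{-\omega(n)}$ is again $1-n^{-\omega(n)}$ (as $k=n^{o(1)}$), a sum of $k$ functions each $o(1)$ is $o(1)$, and $\prod_{i\in[k]}(1+\epsilon_i)=1+\sum_{i\in[k]}\epsilon_i(1+o(1))$ whenever every $\epsilon_i\to 0$.

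For statement \ref{bp-ep2} I would first record the elementary identity $\Prob_{\mathcal{B}_{n,p}}(E_n)=\tfrac12\bigl(1+(q-p)^{2N}\bigr)$ (equivalently, take $A_n=E_n$ in statement \ref{bp-ep} of \autoref{approx-steps-1}). Since $\mathcal{E}_{n,p_i}$ is $\mathcal{B}_{n,p_i}$ conditioned on $E_n$, the Radon--Nikodym derivative $\dd\Prob_{\mathcal{E}_{n,\vec p}}/\dd\Prob_{\mathcal{B}_{n,\vec p}}$ is the product of the coordinatewise derivatives, namely $\prod_{i\in[k]}\bigl[\Prob_{\mathcal{B}_{n,p_i}}(E_n)\bigr]^{-1}\Ind[d_i\in E_n]$; integrating $\Ind_{A_n}$ against this for $A_n\subseteq E_n^k$ yields the formula. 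Statement \ref{ep-ip2} follows the same lines: the construction of $\mathcal{I}_{n,p_i}$ from $\mathcal{E}_{n,p_i}$ replaces $p_i$ by an independent truncated-normal parameter $p'_i$ with density $\phi(\,\cdot\,;p_i,p_iq_i/2N)/V_{n,p_i}$, so in the product model the $p'_i$ are jointly independent and, conditionally on $\vec p'=\vec x$, the vector $\vec d$ is distributed as $\mathcal{E}_{n,\vec x}=\bigotimes_{i\in[k]}\mathcal{E}_{n,x_i}$; the claimed formula is then the law of total probability (Fubini) over $[0,1]^k$ with mixing density $\prod_{i\in[k]}\phi(x_i;p_i,p_iq_i/2N)/V_{n,p_i}$.

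Statements \ref{ip-e'p2} and \ref{e'p-dp2} carry the bookkeeping. For \ref{ip-e'p2}, on the event $\{|M(d_i)-2Np_i|\le 2y\sqrt{p_iq_iN}~\forall i\}$ statement \ref{ip-e'p} of \autoref{approx-steps-1} applied in each coordinate, together with the product structure, gives $\Prob_{\mathcal{I}_{n,\vec p}}(\vec d)/\Prob_{\mathcal{E}'_{n,\vec p}}(\vec d)=\prod_{i\in[k]}\bigl(1+O((1+|y|^3)/\sqrt{p_iq_iN})\bigr)$; expanding the product (each cross term being $o$ of a surviving term) yields $1+\sum_{i\in[k]}O((1+|y|^3)/\sqrt{p_iq_iN})$. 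For \ref{e'p-dp2} I would take $S_{\vec p}(n)=\prod_{i\in[k]}R_{p_i}(n)$ and $S'_{\vec p}(n)=\prod_{i\in[k]}R'_{p_i}(n)$, with $R_{p_i}(n)$, $R'_{p_i}(n)$ the single-graph sets of statement \ref{e'p-dp} (applicable since each $p_iq_i$ lies in the acceptable window). Property~(a) reads $\prod_{i\in[k]}\Prob_{\mathcal{D}_{n,p_i}}(R_{p_i}(n))=\prod_{i\in[k]}(1-n^{-\omega(n)})=1-n^{-\omega(n)}$ (and similarly for $S'_{\vec p}(n)$); property~(b) is immediate coordinatewise, taking the slowest-vanishing of the $k$ error functions; and property~(c) follows by multiplying the $k$ single-graph ratios, so the exponents $\tfrac14\bigl(1-(\gamma_2)_i^2/\lambda_i^2(1-\lambda_i)^2\bigr)$ add to $\tfrac14\bigl(k-\sum_{i\in[k]}(\gamma_2)_i^2/\lambda_i^2(1-\lambda_i)^2\bigr)$ and the perturbations $\delta_{d_i}$ add to $\varepsilon_{\vec d}$ with $|\varepsilon_{\vec d}|\le\sum_{i\in[k]}\delta_i(n)=:\varepsilon(n)=o(1)$.

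Conceptually nothing deep happens here; the real work, and the main obstacle, is making the error accumulation airtight --- verifying that the $o(1)$ and $n^{-\omega(n)}$ bounds inherited from \autoref{approx-steps-1} are uniform over the relevant subsets of $E_n$ (so that coordinatewise maxima over $i\in[k]$ are legitimate), checking that the product expansions in \ref{ip-e'p2} and in property~(a) genuinely stay within the same asymptotic classes (precisely where finiteness of $k$ is used), and, for \ref{ep-ip2}, spelling out once why $\bigotimes_{i\in[k]}\mathcal{I}_{n,p_i}$ coincides with the single mixture of $\mathcal{E}_{n,\vec x}$ over the product parameter measure --- the one place a product of mixtures must be re-expressed as a mixture over a product.
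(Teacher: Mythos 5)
Your proposal is correct, and for statements~\ref{ip-e'p2} and~\ref{e'p-dp2} it coincides with the paper's proof essentially verbatim: coordinatewise application of \autoref{approx-steps-1}, product expansion keeping first-order terms, and the construction $S_{\vec p}=\prod_i R_{p_i}$, $\varepsilon(n)=\sum_i\delta_i(n)$. Where you diverge is in the measure-theoretic bookkeeping for statements~\ref{bp-ep2} and~\ref{ep-ip2}. The paper verifies the identity on rectangles $R_1\times\cdots\times R_k$ (exactly your Fubini computation) and then extends to arbitrary $A_n\subseteq E_n^k$ via Dynkin's $\pi$-$\lambda$ theorem, checking explicitly that the family of events satisfying each identity is a $\lambda$-system. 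You instead compute the Radon--Nikodym derivative $\dd\Prob_{\mathcal{E}_{n,\vec p}}/\dd\Prob_{\mathcal{B}_{n,\vec p}}$ as the product of coordinatewise densities (constant on $E_n^k$, which immediately gives~\ref{bp-ep2} for every $A_n\subseteq E_n^k$), and for~\ref{ep-ip2} you identify $\bigotimes_i\mathcal{I}_{n,p_i}$ as a single mixture of $\mathcal{E}_{n,\vec x}$ over the product mixing density. Both routes are valid; yours is more direct for~\ref{bp-ep2}, while for~\ref{ep-ip2} the step you correctly flag as the crux --- that a product of mixtures is the mixture of products --- is itself usually proved by checking agreement on rectangles and invoking uniqueness on the generated $\sigma$-algebra, so you end up needing the same $\pi$-system argument the paper makes explicit (though on the finite space $E_n^k$ agreement on singletons, which are rectangles, already suffices, so either formalization is heavier than strictly necessary). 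One point you inherit from the paper rather than resolve: applying \autoref{approx-steps-1}(\ref{ip-e'p}) in every coordinate requires $y=o(\sqrt[6]{p_iq_iN})$ for \emph{each} $i$, i.e.\ control by the minimum of the $p_iq_iN$, whereas the hypothesis as stated involves the maximum; the paper's own proof silently makes the same per-coordinate assumption.
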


\begin{proof}
See \autoref{proof-of-approx-steps}.
\end{proof}

Using the theorem's stepwise approximation through the models,
we can derive a general-purpose rule for vanishing probabilities of
events involving independent $G(n,p)$ random graphs,
similar to the one stated in \autoref{full-approx-1}.

\begin{theorem}
\label{full-approx-2}
Let $A_n$ be a sequence of events in $E_n^k$.
If $\vec{p}\in[0,1]^k$ satisfies $\min_{i\in[k]} p_i\geq \omega(\log n/n)$ and
$\max_{i\in[k]} p_i \leq o(n^{-1/2})$,
then $\Prob_{\mathcal{B}_{n,\vec{p}}}(A_n) = o(n^{-a})$ implies
$\Prob_{\mathcal{D}_{n,\vec{p}}}(A_n) = o(n^{-a})$
for any fixed $a > 0$.
\end{theorem}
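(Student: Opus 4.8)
The plan is to chain together the four approximation steps of \autoref{approx-steps-2} exactly as the informal scheme $\mathcal{B}\approx\mathcal{E}\approx\mathcal{I}\approx\mathcal{E}'\approx\mathcal{D}$ suggests, propagating the hypothesis $\Prob_{\mathcal{B}_{n,\vec{p}}}(A_n) = o(n^{-a})$ through each model. First I would observe that the hypothesis $\min_i p_i \geq \omega(\log n/n)$ and $\max_i p_i \leq o(n^{-1/2})$ implies, after possibly replacing $p_i$ by $q_i$ where needed, that $\omega(n)\log n/n^2 \leq p_iq_i \leq o(n^{-1/2})$ for every $i$, so that all four parts of \autoref{approx-steps-2} apply; in particular $\min_i p_iq_iN \to \infty$ at a rate faster than $\log n \cdot \omega(n)$. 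Step 1: by part~\ref{bp-ep2}, since $\prod_i[1+(q_i-p_i)^{2N}] \geq 1$ and $2^k$ is a constant, $\Prob_{\mathcal{E}_{n,\vec{p}}}(A_n) \leq 2^k\Prob_{\mathcal{B}_{n,\vec{p}}}(A_n) = o(n^{-a})$. Step 2 (binomial-to-integrated): here I need to pass from a bound that holds for a \emph{fixed} parameter vector $\vec p$ to the integrated model, which averages $\Prob_{\mathcal{E}_{n,\vec{x}}}(A_n)$ against a product of narrow truncated Gaussians centered at the $p_i$. The subtlety is that the hypothesis only gives $o(n^{-a})$ at $\vec x = \vec p$, not uniformly over the support of the Gaussian; but since $p_iq_i/2N$ is tiny, the parameter $\vec x$ concentrates in a shrinking neighborhood of $\vec p$ where $x_iq_i x_i$ stays in the acceptable band, and one shows $\Prob_{\mathcal{E}_{n,\vec{x}}}(A_n)$ inherits the $o(n^{-a})$ bound there (this is where the single-graph argument, and the $\omega(\log n/n)$ lower bound giving the band plenty of room, is invoked), while the Gaussian tail outside contributes $n^{-\omega(n)}$, which is negligible against $n^{-a}$. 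Hence $\Prob_{\mathcal{I}_{n,\vec{p}}}(A_n) = o(n^{-a})$.

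Step 3 (integrated-to-weighted-even-sum): part~\ref{ip-e'p2} gives a pointwise ratio $\Prob_{\mathcal{I}_{n,\vec{p}}}(\vec d)/\Prob_{\mathcal{E}'_{n,\vec{p}}}(\vec d) = 1 + O(\sum_i (1+|y|^3)/\sqrt{p_iq_iN})$ on the set $T_y := \{\vec d : |M(d_i)-2Np_i|\leq 2y\sqrt{p_iq_iN}\ \forall i\}$. Choosing $y = y(n)$ growing slowly — say $y = (\log n)^{1/3}$, which is $o(\sqrt[6]{\max_i p_iq_iN})$ since $p_iq_iN = \omega(n)\log n$ — makes the relative error $o(1)$ while making $T_y$ capture all but $n^{-\omega(n)}$ of the mass in $\mathcal{E}'$ (each $M(d_i)/2$ is $\Bin(N,p_i)$, so a deviation of $2y\sqrt{p_iq_iN}$ standard-deviation units has probability $\leq e^{-\Omega(y^2)} = n^{-\omega(n)}$ by a Chernoff bound; the same tail bound must be checked for $\mathcal I$ and for $\mathcal B$, which is routine). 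Therefore $\Prob_{\mathcal{E}'_{n,\vec{p}}}(A_n) \leq \Prob_{\mathcal{E}'_{n,\vec{p}}}(A_n \cap T_y) + n^{-\omega(n)} \leq (1+o(1))\Prob_{\mathcal{I}_{n,\vec{p}}}(A_n \cap T_y) + n^{-\omega(n)} = o(n^{-a})$.

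Step 4 (weighted-even-sum-to-degree-sequence): apply part~\ref{e'p-dp2}. On $S_{\vec p}(n) \cap S'_{\vec p}(n)$, the exponent $\tfrac14(k - \sum_i (\gamma_2)_i^2/\lambda_i^2(1-\lambda_i)^2)$ is, by part~\ref{e'p-dp2-2}, $\tfrac14(k - \sum_i(1+o(1))^2) = o(1)$, and $|\varepsilon_{\vec d}| \leq \varepsilon(n) = o(1)$, so the ratio $\Prob_{\mathcal{D}_{n,\vec{p}}}(\vec d)/\Prob_{\mathcal{E}'_{n,\vec{p}}}(\vec d) = 1+o(1)$ uniformly there; meanwhile $\Prob_{\mathcal{D}_{n,\vec{p}}}$ of the complement of $S_{\vec p}\cap S'_{\vec p}$ is $n^{-\omega(n)}$. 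Writing $S = S_{\vec p}(n)\cap S'_{\vec p}(n)$,
\[
\Prob_{\mathcal{D}_{n,\vec{p}}}(A_n) \leq \Prob_{\mathcal{D}_{n,\vec{p}}}(A_n\cap S) + n^{-\omega(n)}
\leq (1+o(1))\Prob_{\mathcal{E}'_{n,\vec{p}}}(A_n\cap S) + n^{-\omega(n)} = o(n^{-a}),
\]
which is the desired conclusion. I expect the main obstacle to be Step 2: all the other steps are either a clean algebraic identity (Step 1) or a pointwise ratio that is uniformly $1+o(1)$ on a high-probability set (Steps 3 and 4), but Step 2 requires turning a single-point hypothesis into an averaged one, which forces a genuine argument that the bound $o(n^{-a})$ is stable under the slight perturbation of the parameters induced by the integrated model — this is precisely the place where one must use that $\vec p$ sits comfortably inside the acceptable range (the $\omega(\log n/n)$ slack) so that the whole effective support of the Gaussian lies in a region where the approximation hypotheses remain valid, and where one has to be careful that "$o(n^{-a})$ at each $\vec x$" can be upgraded to a uniform statement over the relevant shrinking neighborhood rather than just holding at the single parameter value supplied by the hypothesis.
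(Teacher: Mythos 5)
Your route is the paper's own: the same four-step chain through $\mathcal{E}$, $\mathcal{I}$ and $\mathcal{E}'$, with Steps~1 and~4 essentially identical to the paper's argument. Two points need attention.

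The obstacle you flag in Step~2 is real, and you leave it unresolved (``one shows\dots''). The paper closes it as follows: after splitting $[0,1]^k$ into $3^k$ boxes built from a left, central and right interval per coordinate, the central box $C_1\times\cdots\times C_k$ with $C_i=[p_i(1-\delta_i),p_i(1+\delta_i)]$ is compact, and for fixed $n$ the map $\vec x\mapsto\Prob_{\mathcal{E}_{n,\vec x}}(A_n)$ is continuous in each coordinate (a finite sum of polynomials in the $x_i$ times the explicit normalizer from statement~\ref{bp-ep2} of \autoref{approx-steps-2}), hence attains its maximum at some $\vec y(n)$ in the box. Since $\delta_i=o(p_i)$, each $y_i=p_i(1+o(1))$ again satisfies $\omega(\log n/n)\le y_i\le o(n^{-1/2})$, so the Step-1 conclusion applied to the admissible sequence $\vec y$ gives $\Prob_{\mathcal{E}_{n,\vec y}}(A_n)=o(n^{-a})$, and the central integral is bounded by this maximum. (Like your sketch, this tacitly invokes the hypothesis $\Prob_{\mathcal{B}_{n,\cdot}}(A_n)=o(n^{-a})$ at the perturbed admissible sequence $\vec y$, not only at $\vec p$; that is how the theorem is used in section~\ref{application} anyway.) Your claim that the Gaussian mass outside the central region is $n^{-\omega(n)}$ is not what the paper's shrinking width delivers --- with $\delta_i=\sqrt{cq_i\log n/Np_i}$ the tail is $n^{-c}$ for a constant $c>a$, which suffices; a superpolynomial tail would require a band of constant relative width (which also stays admissible, so this is reparable).

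The concrete error is in Step~3: the deviation $2y\sqrt{p_iq_iN}$ is $y$ standard deviations of $M(d_i)$, so with $y=(\log n)^{1/3}$ the Chernoff tail is $e^{-\Theta(y^2)}=e^{-\Theta((\log n)^{2/3})}$, which is $\omega(n^{-a})$ for every fixed $a>0$ --- not $o(n^{-a})$, let alone $n^{-\omega(n)}$ --- and your bound $\Prob_{\mathcal{E}'_{n,\vec p}}(A_n)\le(1+o(1))\Prob_{\mathcal{I}_{n,\vec p}}(A_n\cap T_y)+\Prob_{\mathcal{E}'_{n,\vec p}}(\overline{T_y})$ then fails to give $o(n^{-a})$. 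You need $\log n=o(y^2)$ together with $y=o((\min_i p_iq_iN)^{1/6})$; there is ample room since $(p_iq_iN)^{1/6}=\omega((n\log n)^{1/6})$, and the paper takes $y=\max_i(2Np_i)^{1/12}/\sqrt{2q_i}$, for which the tail is $\exp\{-\tfrac16(2Np_i)^{1/6}\}$. Note also that only the tail under $\mathcal{E}'_{n,\vec p}$, where $M(d_i)/2$ is exactly $\Bin(N,p_i)$, is needed; the corresponding checks under $\mathcal{I}$ and $\mathcal{B}$ that you mention are not used in the decomposition.
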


\begin{proof}
Before anything, we note that our hypotheses imply that
$\min_{i\in[k]} p_i\geq \omega(1/n)$ and
$\max_{i\in[k]} p_i \leq o(1)$,
facts that we will use several times along the proof.
Let $a>0$ be fixed, and assume $\Prob_{\mathcal{B}_{n,\vec{p}}}(A_n) = o(n^{-a})$.

In agreement with the approximation scheme previously presented,
we will prove our assertion in four steps,
each addressing one of the following statements:

\begin{enumerate}
\item $\Prob_{\mathcal{B}_{n,\vec{p}}}(A_n) = o(n^{-a})$ implies
$\Prob_{\mathcal{E}_{n,\vec{p}}}(A_n) = o(n^{-a})$;
\item $\Prob_{\mathcal{E}_{n,\vec{p}}}(A_n) = o(n^{-a})$ implies
$\Prob_{\mathcal{I}_{n,\vec{p}}}(A_n) = o(n^{-a})$;
\item $\Prob_{\mathcal{I}_{n,\vec{p}}}(A_n) = o(n^{-a})$ implies
$\Prob_{\mathcal{E}'_{n,\vec{p}}}(A_n) = o(n^{-a})$;
\item $\Prob_{\mathcal{E}'_{n,\vec{p}}}(A_n) = o(n^{-a})$ implies
$\Prob_{\mathcal{D}_{n,\vec{p}}}(A_n) = o(n^{-a})$.
\end{enumerate}

\begin{description}
\item[Step 1]
Assume $\Prob_{\mathcal{B}_{n,\vec{p}}}(A_n) = o(n^{-a})$.
\autoref{approx-steps-2}(\ref{bp-ep2}) states that
$$\Prob_{\mathcal{E}_{n,\vec{p}}}(A_n) = \frac{2^k\Prob_{\mathcal{B}_{n,\vec{p}}}(A_n)}
{\prod_{i\in[k]}[1+(q_i-p_i)^{2N}]}.$$
For each $i\in[k]$, $p_i = \omega(1/n)$ implies that $2Np_i\to\infty$ and
$(q_i-p_i)^{2N} = (1-2Np_i/2N)^{2N} \to 0$.
There are finitely many such $i$, thus it holds that
$\Prob_{\mathcal{E}_{n,\vec{p}}}(A_n) \sim
2^k\Prob_{\mathcal{B}_{n,\vec{p}}}(A_n)$ and,
since $\Prob_{\mathcal{B}_{n,\vec{p}}}(A_n) = o(n^{-a})$,
it follows that $\Prob_{\mathcal{E}_{n,\vec{p}}}(A_n) = o(n^{-a})$.

\item[Step 2]
Assume $\Prob_{\mathcal{E}_{n,\vec{p}}}(A_n) = o(n^{-a})$.
We turn to the expression that links
$\mathcal{E}_{n,\vec{p}}$ to $\mathcal{I}_{n,\vec{p}}$,
presented in \autoref{approx-steps-2}(\ref{ep-ip2}).

The normalization constant $\prod_{i\in[k]} V_{n,p_i}$ is the probability that
$k$ independent $\Nor(p_1,p_1q_1/2N),\ldots,\Nor(p_k,p_kq_k/2N)$ random variables
assume values in $[0,1]$.
Standardizing these random variables and denoting by $Q(\cdot)$
the Q-function\footnote{The \emph{Q-function} is the tail distribution of a standard normal random variable.},
we have that, for any $i\in[k]$,
\begin{align*}
V_{n,p_i} &= Q\left(-\frac{p_i}{\sqrt{p_iq_i/2N}}\right)
- Q\left(\frac{q_i}{\sqrt{p_iq_i/2N}}\right) \\
&= Q\left(-\sqrt{\frac{2Np_i}{q_i}}\right) - Q\left(\sqrt{\frac{2Nq_i}{p_i}}\right) \\
&\to 1,
\end{align*}
where the limit comes from the facts that
$2Np_i/q_i=\omega(1)$ whenever $p_i = \omega(1/n)$ and
$2Nq_i/p_i=\omega(1)$ whenever $p_i = o(1)$.
Since there are finitely many $i$, it holds that $1/\prod_{i\in[k]}V_{n,p_i} = \Theta(1)$.

For the integral, we will split the domain of integration
into several rectangles and deal with them separately.
To simplify our notation, we denote our integrand by $g(\vec{x}) =
\prod_{i\in[k]}\phi(x_i;p_i,\frac{p_iq_i}{2N})
\Prob_{\mathcal{E}_{n,\vec{x}}}(A_n)$.

Pick some constant $c > a$, and let $\delta_i = \delta_i(n) = \sqrt{cq_i\log n/Np_i}$
for each $i\in[k]$.
Note that $np_i = \omega(\log n)$ implies
$\delta_i = \sqrt{cp_iq_i\log n/Np_i^2} = o(p_iq_i\log n/\log^2 n) = o(p_i)$.
Since $p_i = o(q_i)$ whenever $p_i = o(1)$,
it holds that $\delta_i < p_i,q_i$ for all $i\in[k]$ as long as $n$ is for large enough.
For such $n$, we can perform the following decomposition of $[0,1]^k$.

Split the $i$-th coordinate of $[0,1]^k$ into three intervals: a \emph{left} section $L_i = [0,p_i(1-\delta_i))$, a \emph{central} section $C_i = [p_i(1-\delta_i),p_i(1+\delta_i)]$ and a \emph{right} section $R_i = (p_i(1+\delta_i),1]$. Now, to each string $\Sigma\in\{L,C,R\}^k$, associate the rectangle obtained by taking the Cartesian product of corresponding intervals for each coordinate --- call this region $S_\Sigma$. This splitting procedure is illustrated in \autoref{subdomains} for $k = 2$.

\begin{figure}[h]
\begin{center}
\includegraphics[draft=false,width=0.8\textwidth]{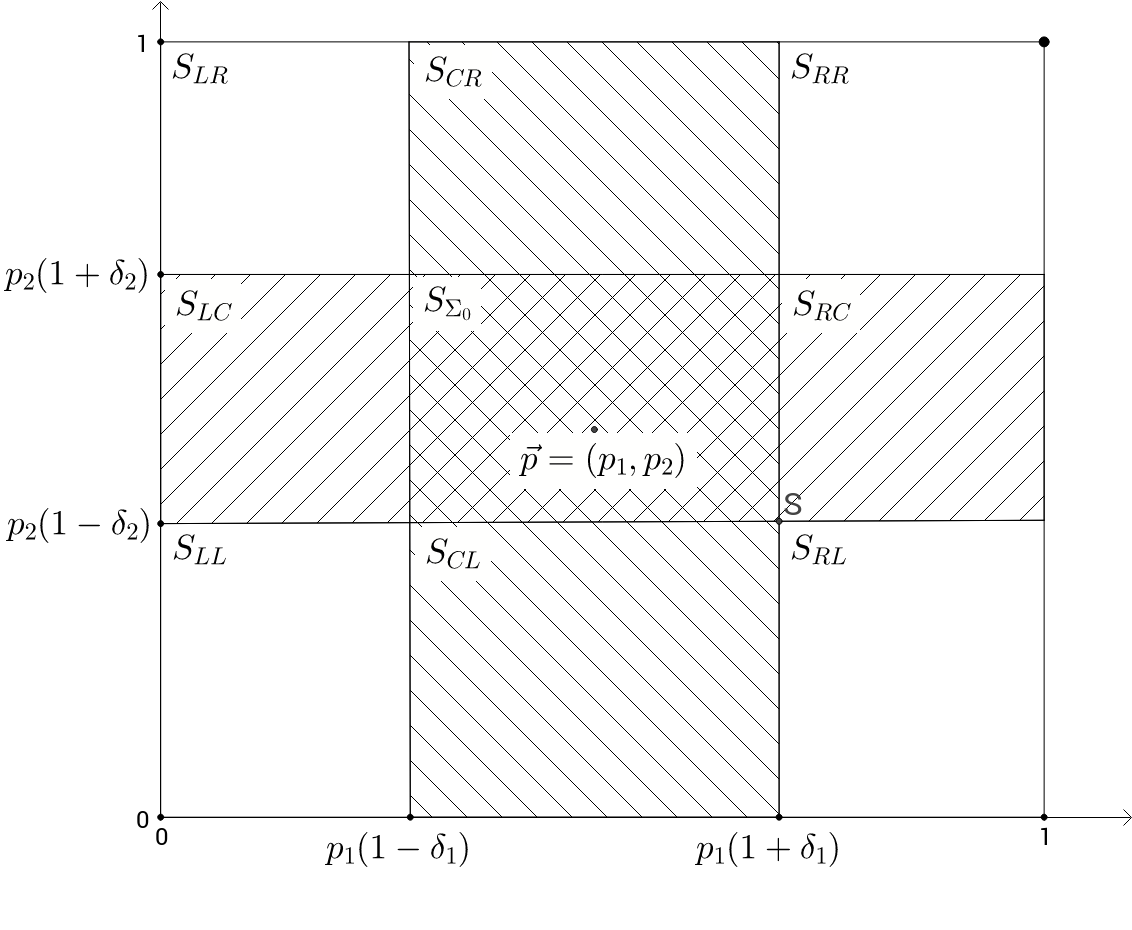}
\end{center}
\caption{Splitting $[0,1]^k$ into $3^k$ smaller domains of integration,
illustrated for the case $k = 2$.
Each region is assigned to a string in $\{L,C,R\}^k$ --- for instance, the string $CL$
corresponds to the lower central domain
$S_{CL} = C_1\times L_2 = [p_1(1-\delta_1),p_1(1+\delta_1)]\times[0,p_2(1-\delta_2))$.
The doubly hatched region $S_{\Sigma_0}$ corresponds to the intersection of all central sections
and corresponds to string $\Sigma_0 = CC{\ldots}C$.}
\label{subdomains}
\end{figure}

It is easy to see that $\biguplus_{\Sigma\in\{L,C,R\}^k} S_\Sigma = [0,1]^k$. This allows us to write $$\int_{[0,1]^k}\prod_{i\in[k]}\phi\left(x_i;p_i,\frac{p_iq_i}{2N}\right)
\Prob_{\mathcal{E}_{n,\vec{x}}}(A_n)\dd\vec{x} = \sum_{\Sigma\in\{L,C,R\}^k} g(\vec{x})\dd\vec{x}.$$

Denote $\Sigma = \Sigma^{(1)}\ldots\Sigma^{(k)}$, that is, $\Sigma^{(i)}$ denotes the $i$-th character of $\Sigma$.
There are now two cases to consider.
For the first case, assume $\Sigma\in\{L,C,R\}^k$ has at least one coordinate distinct from $C$,
namely $\Sigma^{(j)} \neq C$ for some $j\in[k]$.
Noting that $\Prob_{\mathcal{E}_{n,\vec{x}}}(A_n) \leq 1$, we can write:

\begin{align*}
\int_{\Sigma^{(1)}_1\times\cdots\times\Sigma^{(k)}_k} g(\vec{x})\dd\vec{x}
&= \int_{\Sigma^{(1)}_1\times\cdots\times\Sigma^{(k)}_k} 
\prod_{i\in[k]}\phi\left(x_i;p_i,\frac{p_iq_i}{2N}\right)\Prob_{\mathcal{E}_{n,\vec{x}}}(A_n)\dd\vec{x}\\
&\leq  \int_{\Sigma^{(1)}_1\times\cdots\times\Sigma^{(k)}_k} 
\prod_{i\in[k]}\phi\left(x_i;p_i,\frac{p_iq_i}{2N}\right)\dd\vec{x} \\
&= \prod_{i\in[k]}\left[\int_{\Sigma^{(i)}_i}\phi\left(x_i;p_i,\frac{p_iq_i}{2N}\right)\dd x_i\right].
\end{align*}

In the last expression, all terms are bounded by 1.
Moreover, since $\Sigma^{(j)}$ is either $L$ or $R$, it holds that
$$\int_{\Sigma^{(j)}_j}\phi\left(x_j;p_j,\frac{p_jq_j}{2N}\right)\dd x_j = o(n^{-a}),$$ as follows:

\begin{itemize}
\item if $\Sigma^{(j)} = L$, then
\begin{align*}
\int_{L_j}\phi\left(x_j;p_j,\frac{p_jq_j}{2N}\right)\dd x_j
&= 1-Q\left(-\frac{p_j\delta_j}{\sqrt{p_jq_j/2N}}\right) \\
&= Q\left(\frac{p_j\delta_j}{\sqrt{p_jq_j/2N}}\right) \\
& \leq \exp\left\{-\frac{Np_j\delta_j^2}{q_j}\right\} \\
& = \exp\{-c \log n\} = o(n^{-a}),
\end{align*}
where each step holds due to, respectively, definition, symmetry of tails and Chernoff bound for the Q-function, the choice of $\delta_j$ and the choice of $c$;
\item if $\Sigma^{(j)} = R$, then
\begin{align*}
\int_{R_ij}\phi\left(x_j;p_j,\frac{p_jq_j}{2N}\right)\dd x_j
&= Q\left(\frac{p_j\delta_j}{\sqrt{p_jq_j/2N}}\right) = o(n^{-a}),
\end{align*}
by a similar reasoning as in the previous case.
\end{itemize}

These two facts combined, imply $$\int_{\Sigma^{(1)}_1\times\cdots\times\Sigma^{(k)}_k} g(\vec{x})\dd\vec{x}
\leq o(n^{-a})\cdot 1\cdot1\cdot\cdots\cdot1 = o(n^{-a}).$$

For the second case, $\Sigma = CC{\ldots}C$, and a few prior comments are appropriate.
First, for any $i\in[k]$, note that, since $\delta_i = o(p_i)$,
for any $x_i = x_i(n)\in[p_i(1-\delta_i),p_i(1+\delta_i)]$, it is true that
$x_i = p_i(1+o(p_i))$ and, therefore, $x_i$ has the same asymptotics as $p_i$ ---
namely, $o(n^{-1/2}) \leq x_i \leq \omega(\log n/n)$.

Also, for any fixed $n$, $\Prob_{\mathcal{E}_{n,\vec{p}}}(A_n)$
is a continuous function of $p_i$ for each $i\in[k]$.
This comes from \autoref{approx-steps-2}(\ref{bp-ep2})
and the fact that $\Prob_{\mathcal{B}_{n,\vec{p}}}(A_n) =
\sum_{\vec{d}\in A_n} \Prob_{\mathcal{B}_{n,\vec{p}}}(\vec{d})$:
since the probability of each such $\vec{d}$ under measure $\Prob_{\mathcal{B}_{n,\vec{p}}}$
is a continuous function of $p_i$ for each $i$ (product of powers of $p_i$ and $1-p_i$
and constants with respect to $p_i$),
and the sum of these functions has a finite number of terms,
continuity of $\Prob_{\mathcal{B}_{n,\vec{p}}}(A_n)$
with respect to each $p_i$ follows; then, by \autoref{approx-steps-2}(\ref{bp-ep2}),
$\Prob_{\mathcal{E}_{n,\vec{p}}}(A_n)$ is the product between
$\Prob_{\mathcal{B}_{n,\vec{p}}}(A_n)$ and a continuous function of $p_i$,
so continuity of the former with respect to each $p_i$ also follows.

As a consequence of these results, for $\vec{x}\in C_1\times\cdots\times C_k$,
the function $\Prob_{\mathcal{E}_{n,\vec{x}}}(A_n)$,
being a continuous function over this compact set,
will attain a maximum value for some argument $\vec{y}(n) = (y_1,\ldots,y_k)(n)$ in this set.
Such $\vec{y}$ will, forcefully, satisfy
$\omega(\log n/n) \leq y_i \leq o(n^{-1/2})$,
which means that $\Prob_{\mathcal{E}_{n,\vec{y}}}(A_n) = o(n^{-a})$,
by our conclusion from the previous step.

That being said, we can assert that

\begin{align*}
\lefteqn{\int_{C_1\times\cdots\times C_k}g(\vec{x})\dd \vec{x}} \\
&\qquad= \int_{C_1\times\cdots\times C_k}\prod_{i\in[k]}\phi\left(x_i;p_i,\frac{p_iq_i}{2N}\right)
\Prob_{\mathcal{E}_{n,\vec{x}}}(A_n)\dd\vec{x} \\
&\qquad \leq \int_{C_1\times\cdots\times C_k}\prod_{i\in[k]}\phi\left(x_i;p_i,\frac{p_iq_i}{2N}\right)~\cdot \left[\max_{\vec{w}\in C_1\times\cdots\times C_k}
\Prob_{\mathcal{E}_{n,\vec{w}}}(A_n)\right]\dd\vec{x} \\
&\qquad = \int_{C_1\times\cdots\times C_k}\prod_{i\in[k]}\phi\left(x_i;p_i,\frac{p_iq_i}{2N}\right)
\Prob_{\mathcal{E}_{n,\vec{y}}}(A_n)\dd\vec{x} \\
&\qquad \leq \Prob_{\mathcal{E}_{n,\vec{y}}}(A_n)\cdot
\int_{[0,1]^k}\prod_{i\in[k]}\phi\left(x_i;p_i,\frac{p_iq_i}{2N}\right)\dd\vec{x} \\
&\qquad = o(n^{-a})\cdot 1 = o(n^{-a}).
\end{align*}

Thus, we conclude that
$$\Prob_{\mathcal{I}_{\vec{p}}}(A_n) =
\Theta(1)\cdot(3^k\cdot o(n^{-a})) = o(n^{-a}).$$

\item[Step 3]
Assume $\Prob_{\mathcal{I}_{n,\vec{p}}}(A_n) = o(n^{-a})$.
We begin by recalling that
$$\left(\frac{1}{2}M(S_1),\ldots,\frac{1}{2}M(S_k)\right)~\eqD~
\bigox_{i\in[k]}\Bin(N,p_i)~\text{ under }~\Prob_{\mathcal{E}'_{n,\vec{p}}}.$$

Define the event $N_n = \{|M(S_i)-2Np_i| < 2Np_i\cdot\varepsilon_i~\forall~i\in[k]\}$, with
$\varepsilon_i = (2Np_i)^{-5/12}$.
By the Chernoff bound, we have that, for all $i\in[k]$,
\begin{align*}
\Prob_{\mathcal{E}'_{n,\vec{p}}}(|M(S_i)-2Np_i|\geq 2Np_i\cdot\varepsilon_i)
& \leq 2e^{-2Np_i\varepsilon_i^2/6} = 2e^{-\frac{1}{6}(2Np_i)^{1/6}}.
\end{align*}
Thus, by the union bound, $\Prob_{\mathcal{E}'_{n,\vec{p}}}(\overline{N_n})
\leq 2\sum_{i\in[k]}e^{-\frac{1}{6}(2Np_i)^{1/6}}$.

Now, by definition, it holds in the event $N_n$ that, for each $i\in[k]$:
\begin{align*}
|M(S_i)-2Np_i| &< 2Np_i\cdot\varepsilon_i \\
& = (2Np_i)(2Np_i)^{-5/12}\frac{\sqrt{4Np_iq_i}}{\sqrt{4Np_iq_i}} \\
&= \left[\frac{(2Np_i)^{1/12}}{\sqrt{2q_i}}\right]\sqrt{4Np_iq_i}.
\end{align*}

These inequalities also hold in the event $A_n\cap N_n\subseteq N_n$.
Now, note that $(2Np_i)^{1/12}/\sqrt{2q_i} = o(\sqrt[6]{2Np_iq_i})$ for all $i\in[k]$,
which allows us to relate the probability of $A_n\cap N_n$ under measures
$\Prob_{\mathcal{E}'_{n,\vec{p}}}$ and $\Prob_{\mathcal{I}_{n,\vec{p}'}}$.
We choose
$y = \max_{i\in[k]}\{(2Np_i)^{1/12}/\sqrt{2q_i}\}$;
this choice of $y$ and $q_i = \Theta(1)$ imply that, for all $i\in[k]$,
$(1+|y|^3)/\sqrt{p_iq_iN} = o((Np_i)^{-1/2})+o(n^{3/8})/\omega(n^{1/2}) = o(1)$.
From these facts, using \autoref{approx-steps-2}(\ref{ip-e'p2}),
it follows that
\begin{align*}
\Prob_{\mathcal{E}'_{n,\vec{p}}}(A_n) &=
\Prob_{\mathcal{E}'_{n,\vec{p}}}(A_n\cap\overline{N_n}) +
\Prob_{\mathcal{E}'_{n,\vec{p}}}(A_n\cap N_n) \\
&\leq \Prob_{\mathcal{E}'_{n,\vec{p}}}(\overline{N_n}) +
\Prob_{\mathcal{E}'_{n,\vec{p}}}(A_n\cap N_n) \\
&\leq 2\sum_{i\in[k]}e^{-\frac{1}{6}(2Np_i)^{1/6}} \\
&\qquad+ \Prob_{\mathcal{I}_{n,\vec{p}}}(A_n\cap \overline{N_n})
\left(1+\sum_{i\in[k]}O\left(\frac{1+|y|^3}{\sqrt{p_iq_iN}}\right)\right)^{-1} \\
&\leq e^{-\omega(n)} + \Prob_{\mathcal{I}_{n,\vec{p}}}(A_n)(1+k\cdot o(1))^{-1} \\
&= o(n^{-a}) + o(n^{-a})(\Theta(1))^{-1} = o(n^{-a}). \\
\end{align*}

\item[Step 4]
Assume $\Prob_{\mathcal{E}'_{n,\vec{p}}}(A_n) = o(n^{-a})$.
Let the sets $S_{\vec{p}}(n), S'_{\vec{p}}(n)$ and the real function $\varepsilon(n)$
be as in \autoref{approx-steps-2}(\ref{e'p-dp2})
(note that our hypotheses about $\vec{p}$ imply
the hypotheses of this theorem are satisfied),
and define the set $T_{\vec{p}}(n) = S_{\vec{p}}(n)\cap S'_{\vec{p}}(n)$.
Then the following facts hold:

\begin{enumerate}
\item $\Prob_{\mathcal{D}_{n,\vec{p}}}(T_{\vec{p}}(n)) = 1 - n^{-\omega(n)}$, by the union bound;
\item for every $\vec{d}\in T_{\vec{p}}(n)$,
there is some $\varepsilon_{\vec{d}}$ such that
$|\varepsilon_{\vec{d}}| \leq \varepsilon(n)$ and
\begin{align*}
\frac{\Prob_{\mathcal{D}_{n,\vec{p}}}(\vec{d})}
{\Prob_{\mathcal{E}'_{n,\vec{p}}}(\vec{d})}
&= \exp\left\{\frac{1}{4}\left(k-\sum_{i\in[k]}\frac{(\gamma_2)_i^2}{\lambda_i^2(1-\lambda_i)^2}
\right)\right\} \cdot\exp\{\varepsilon_{\vec{d}}\};
\end{align*}
\item in $T_{\vec{p}}(n)$, $(\gamma_2)_i = \lambda_i(1-\lambda_i)(1+o(1))$ and
$\gamma'_2 = \lambda'(1-\lambda')(1+o(1))$;
\end{enumerate}

Using these facts, it follows that:
\begin{align*}
\Prob_{\mathcal{D}_{n,\vec{p}}}(A_n)
&= \Prob_{\mathcal{D}_{n,\vec{p}}}(A_n\cap\overline{T_{\vec{p}}(n)}) +
\Prob_{\mathcal{D}_{n,\vec{p}}}(A_n\cap T_{\vec{p}}(n)) \\
&\leq \Prob_{\mathcal{D}_{n,\vec{p}}}(\overline{T_{\vec{p}}(n)}) +
\sum_{\vec{p}\in A_n\cap T_{\vec{p}}(n)}
\Prob_{\mathcal{D}_{n,\vec{p}}}(\vec{p}) \\
&= n^{-\omega(n)} + \sum_{\vec{d}\in A_n\cap T_{\vec{p}}(n)}
\left[\Prob_{\mathcal{E}'_{n,\vec{p}}}(\vec{d})
\vphantom{\sum_{i\in[k]}\frac{(\gamma_2)_i^2}{\lambda_i^2(1-\lambda_i)^2}}~\cdot\right. \\
&\quad \left.\exp\left\{\frac{1}{4}
\left(k-\sum_{i\in[k]}\frac{(\gamma_2)_i^2}{\lambda_i^2(1-\lambda_i)^2}\right)\right\}
\cdot\exp\{\varepsilon_{\vec{d}}\}\right] \\
&= n^{-\omega(n)} + \left[\sum_{\vec{d}\in A_n\cap T_{\vec{p}}(n)}
\Prob_{\mathcal{E}'_{n,\vec{p}}}(\vec{d})\right]~\cdot \\
&\max_{\vec{d}\in A_n\cap T_{\vec{p}}(n)}
\exp\left\{\frac{1}{4}\left(k-\sum_{i\in[k]}\frac{(\gamma_2)_i^2}{\lambda_i^2(1-\lambda_i)^2}
\right)\right\}\cdot \\
&\quad \max_{\vec{d}\in A_n\cap T_{\vec{p}}(n)}
\exp\{\varepsilon_{\vec{d}}\} \\
&\leq n^{-\omega(n)}+\Prob_{\mathcal{E}'_{n,\vec{p}}}(A_n\cap T_{p,p'}(n))~\cdot\\
&\qquad\exp\left\{\frac{1}{4}(k-k(1+o(1))^2)\right\}
\cdot\exp\{\varepsilon(n)\}\\
&\leq o(n^{-a}) + \Prob_{\mathcal{E}'_{n,p,p'}}(A_n) \cdot
\exp\{o(1)\} \cdot \exp\{o(1)\} \\
&= o(n^{-a}) + o(n^{-a}) \cdot \Theta(1) \cdot \Theta(1) = o(n^{-a}).
\end{align*}

\end{description}
\end{proof}

\section{Example application} \label{application}

Our results so far establish an approximation scheme
between the degree sequences of $G(n,p)$ random graphs
and sequences of independent binomial random variables.
As such, it allows us to determine properties of random graphs
via a much simpler and more well-studied object.
Intuitively, if a graph property is related
to some feature of its degree sequence,
one can take this feature as a proxy for the original property,
analyze it assuming the degrees are independent
(that is, under the $B_{n,\vec{p}}$ model),
and use the framework to carry over the findings.

As an example application, consider the traditional problem of \emph{graph isomorphism}:
given two graphs $G_1$ and $G_2$, we would like to determine
whether or not they are isomorphic, that is,
whether there is an edge-preserving mapping between their vertex sets.
While this is an interesting problem,
and vastly explored in graph theory from a deterministic point of view,
it can also be studied in probabilistic settings,
such as that in which $G_1$ and $G_2$ are drawn from known random graph models.
In such settings, most of the work follows an algorithmic approach,
i.e., an algorithm is sought which correctly asserts a.a.s.
whether $G_1$ and $G_2$ are isomorphic.
The asymptotic correctness of the algorithm will, in general,
depend on the random graph model of choice, including its parameters.
Moreover, the use of \emph{canonical labeling algorithms} is often preferred~\citep[see][]{CanonLabelGraphs,RandGraphIso,CanonLabelLinearAvgTime,ProbAnalysisCanonNumberAlgGraphs,BeaconSetGraphIsomorphism}.

Here, by contrast, we follow a structural approach to the problem,
i.e., we would like to determine whether we can or cannot find, a.a.s., isomorphic graphs in a
sequence $G_1,G_2,\ldots,G_k$.
Problems of this nature require a mathematical solution
rather than an algorithmic solution\footnote{In particular,
in a regime where the input random graph instances are isomorphic a.a.s.,
the trivial algorithm that always outputs ``YES'' will be correct a.a.s.}.
In our example, we assume that all graphs at hand
are independent Erdős-Rényi random graphs.
In this case, the following result holds:
\begin{theorem}
\label{isomorphism}
Let $G_1,\ldots,G_k \eqD G(n,p_1)\ox\cdots\ox G(n,p_k)$ with
$\omega(\log n/n) \leq p_i \leq o(n^{-1/2})$ for all $i\in[k]$.
Then, $$\Prob[\text{at least two graphs are isomorphic}] \leq \binom{k}{2}\cdot o(n^{-1/2})$$
and $$\Prob[\text{all graphs are isomorphic}] \leq o(n^{-(k-1)/2}).$$
\end{theorem}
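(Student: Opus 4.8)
The plan is to reduce both isomorphism events to events that depend only on the degree sequences, to bound the probabilities of those events in the binomial model by an elementary anti-concentration estimate, and then to transfer the bounds to $\mathcal{D}_{n,\vec p}$ through \autoref{full-approx-2}. The key elementary fact is that isomorphic graphs have the same degree multiset, and in particular the same number of edges: writing $M(d_i)$ for the degree sum of $G_i$ (twice its number of edges), we have, by definition of $\mathcal{D}_{n,\vec p}$ as the law of the tuple $\vec d$ of degree sequences, $\Prob[G_i\cong G_j]\le\Prob_{\mathcal{D}_{n,\vec p}}[M(d_i)=M(d_j)]$ and $\Prob[G_1\cong\cdots\cong G_k]\le\Prob_{\mathcal{D}_{n,\vec p}}[M(d_1)=\cdots=M(d_k)]$. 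Set $A_n^{(i,j)}=\{\vec d\in E_n^k:M(d_i)=M(d_j)\}$ and $B_n=\{\vec d\in E_n^k:M(d_1)=\cdots=M(d_k)\}$; these are legitimate inputs for \autoref{full-approx-2} (and $\vec d\in E_n^k$ almost surely under $\mathcal{D}_{n,\vec p}$), so it suffices to bound $\Prob_{\mathcal{D}_{n,\vec p}}(A_n^{(i,j)})$ and $\Prob_{\mathcal{D}_{n,\vec p}}(B_n)$.

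First I would work in the binomial model. Under $\Prob_{\mathcal{B}_{n,\vec p}}$ the coordinate $d_i$ is a sequence of $n$ independent $\Bin(n-1,p_i)$ variables, so $M(d_i)\sim\Bin(n(n-1),p_i)$, and the $M(d_i)$ are mutually independent. Writing $\beta_i(m):=\Prob_{\mathcal{B}_{n,p_i}}[M(d_i)=m]$ and $\beta_i^{*}:=\max_m\beta_i(m)$, a standard Stirling estimate for the binomial mode gives $\beta_i^{*}=O\big(1/\sqrt{n(n-1)p_i(1-p_i)}\big)=O\big(1/(n\sqrt{p_i})\big)$ (using $p_i\to 0$), and since $p_i\ge\omega(\log n/n)$ this yields $n\sqrt{p_i}=\omega(\sqrt{n\log n})$, hence $\beta_i^{*}=o(n^{-1/2})$. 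Consequently, dropping the parity restriction and using independence,
\[
\Prob_{\mathcal{B}_{n,\vec p}}(A_n^{(i,j)})\ \le\ \sum_m\beta_i(m)\,\beta_j(m)\ \le\ \beta_j^{*}\sum_m\beta_i(m)\ =\ \beta_j^{*}\ =\ o(n^{-1/2}),
\]
\[
\Prob_{\mathcal{B}_{n,\vec p}}(B_n)\ \le\ \sum_m\prod_{i\in[k]}\beta_i(m)\ \le\ \Big(\prod_{i=2}^{k}\beta_i^{*}\Big)\sum_m\beta_1(m)\ =\ \prod_{i=2}^{k}\beta_i^{*}\ =\ o\big(n^{-(k-1)/2}\big),
\]
the last equality because a finite product of $o(n^{-1/2})$ terms is $o(n^{-(k-1)/2})$.

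Now I would invoke \autoref{full-approx-2}, whose hypotheses are precisely the assumed bounds $\omega(\log n/n)\le p_i\le o(n^{-1/2})$. Applied with $a=1/2$ to $A_n^{(i,j)}$ it gives $\Prob_{\mathcal{D}_{n,\vec p}}(A_n^{(i,j)})=o(n^{-1/2})$, hence $\Prob[G_i\cong G_j]=o(n^{-1/2})$, and summing over the $\binom{k}{2}$ pairs gives $\Prob[\text{some two graphs isomorphic}]\le\binom{k}{2}o(n^{-1/2})$. Applied with $a=(k-1)/2$ to $B_n$ it gives $\Prob_{\mathcal{D}_{n,\vec p}}(B_n)=o(n^{-(k-1)/2})$, hence $\Prob[\text{all graphs isomorphic}]\le o(n^{-(k-1)/2})$.

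I expect the only genuine computation here to be the binomial mode bound $\beta_i^{*}=O(1/(n\sqrt{p_i}))$ together with the check that the admissible range of $p_i$ pushes it below $n^{-1/2}$; everything else is a containment of events, independence in $\mathcal{B}_{n,\vec p}$, and a direct appeal to \autoref{full-approx-2}. It is worth noting that the argument exploits only the coarsest isomorphism invariant — the edge count — so the bounds are surely far from tight; sharpening them would require comparing the full (sorted) degree sequences in $\mathcal{B}_{n,\vec p}$, which is considerably more delicate and is not needed here.
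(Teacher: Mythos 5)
Your proposal is correct, but it reaches the conclusion by a genuinely different isomorphism invariant than the paper. The paper fixes the Borel set $B_n=[\lfloor n\min_i p_i\rfloor,\infty)$ and compares the statistics $F_{B_n}(d_i)$ (the number of vertices of degree in $B_n$), which under $\mathcal{B}_{n,\vec p}$ are independent $\Bin(n,\alpha_i)$ variables with $\alpha_i>1/2$; the collision bound then rests on a cited estimate $\max_x b(x;n,\alpha_i)=o(n^{-1/2})$, and the transfer to $\mathcal{D}_{n,\vec p}$ is made through \autoref{full-approx-2} exactly as you do. You instead use the coarsest invariant, the degree sum $M(d_i)$, which under $\mathcal{B}_{n,\vec p}$ is $\Bin(n(n-1),p_i)$; your mode bound $\beta_i^*=O(1/\sqrt{n(n-1)p_iq_i})=o(1/\sqrt{n\log n})$ is an immediate consequence of $p_iq_i=\omega(\log n/n)$ and is quantitatively comfortable, since the relevant binomial has $\Theta(n^2p_i)=\omega(n\log n)$ as its variance rather than $\Theta(n)$. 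This is in fact a point in your favour: the mode of $\Bin(n,\alpha)$ is $\Theta(1/\sqrt{n\alpha(1-\alpha)})$, which is never $o(n^{-1/2})$ for $\alpha$ bounded away from $0$ and $1$, so the paper's anti-concentration step needs more justification than the bare mode bound it cites, whereas your edge-count version closes cleanly. The remaining structure of the two arguments is identical (event containment via an isomorphism invariant, independence and a pointwise maximum in the binomial model, union bound over pairs, and two applications of \autoref{full-approx-2} with $a=1/2$ and $a=(k-1)/2$), and your $k-1$ factors correctly produce the exponent $-(k-1)/2$. The only caveat, shared with the paper, is that the second bound is meaningful only for $k\geq 2$.
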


To prove this result, we will use an auxiliary graph-theoretic proposition.
Denote by $d_G(v)$ the degree of vertex $v$ in graph $G$.
For an arbitrary Borel set $B$ on the real line,
define $F_B(G) = |\{v\in V(G)~:~d_G(v)\in B\}|$, that is,
$F_B(G)$ counts the number of vertices in $G$ with degrees in $B$.
In general, for any finite sequence $d$ of length $|d|$,
denote $F_B(d) = \{i\in[|d|]~:~d[i]\in B\}$,
where $d[i]$ is the $i$-th component of $d$.
Note that, if $d$ is the degree sequence of graph $G$,
then $F_B(G) = F_B(d)$.

\begin{proposition}\label{iso-set}
If $G,G'$ are isomorphic, then for every Borel set $B$ on the real line,
$F_B(G) = F_B(G')$.
\end{proposition}

\begin{proof}
Let $f~:~V(G)\to V(G')$ be an isomorphism between $G$ and $G'$
(since $G$ and $G'$ are isomorphic, there is at least one such $f$).
$f$ is, by definition, bijective.
Also, since $f$ is edge-preserving, $f$ is also degree-preserving, that is,
$d_G(v) = d_{G'}(f(v))$ for any $v \in V(G)$.
Using these facts, for every Borel set $B$, we have
\begin{align*}
F_B(G) &= |\{v\in V(G)~:~d_G(v)\in B\}| \\
&= |\{v\in V(G)~:~d_{G'}(f(v))\in B\}| \\
&= |\{v'\in V(G')~:~d_{G'}(v')\in B\}| = F_B(G').
\end{align*}
\end{proof}

We can now proceed to the proof of \autoref{isomorphism}:

\begin{proof}[Proof of \autoref{isomorphism}]
Let $B_n = [\lfloor n\min_{i\in[k]}p_i\rfloor,\infty)$.
\autoref{iso-set} implies
\begin{align*}
& \Prob[\text{at least two graphs are isomorphic}] \\
&\qquad \leq \Prob[\exists~i\neq j~:~F_{B_n}(G_i)=F_{B_n}(G_j)] \\
&\qquad = \Prob[\exists~i\neq j~:~F_{B_n}(d_i)=F_{B_n}(d_j)] \\
&\qquad = \Prob_{\mathcal{D}_{n,\vec{p}}}[\exists~i\neq j~:~F_{B_n}(d_i)=F_{B_n}(d_j)],
\end{align*}
where $d_i$ is the degree sequence of graph $G_i$,
and the last equality holds by the distribution of $(d_1,\ldots,d_k)$ under $\Prob$.
We will show that the right-hand side of the inequality is $o(n^{-1/2})$,
and by virtue of \autoref{full-approx-2}, it is enough to show that
$\Prob_{\mathcal{B}_{n,\vec{p}}}[\exists~i\neq j~:~F_{B_n}(d_i)=F_{B_n}(d_j)] = o(n^{-1/2})$.

Now, fix an arbitrary $i\in[k]$. Note that, in the $\mathcal{B}_{n,\vec{p}}$ model,
all elements of the sequence $d_i$ belong to $B_n$ independently.
Furthermore, each such element is a $\Bin(n-1,p)$ random variable and belongs to $B_n$ with probability $\alpha_i = \alpha_i(n) > 1/2$
(since the median of $\Bin(n-1,p)$ is at most $\lceil(n-1)p\rceil$).
This implies that, under $\Prob_{\mathcal{D}_{n,\vec{p}}}$,
$F_{B_n}(d_i)\eqD\Bin(n,\alpha_n)$.

Moving on, let $b(k;n,p)$ be the mass function of a $\Bin(n,p)$ random variable.
Since $\alpha_i = \omega(1/n)$, it holds that
$\max_k b(k;n,\alpha_i) = o(n^{-1/2})$~\citep[see][]{ImprovRandGraphIsomorph}.
This implies
\begin{align*}
& \Prob_{\mathcal{B}_{n,\vec{p}}}[\exists~i\neq j~:~F_{B_n}(d_i)=F_{B_n}(d_j)] \\
&\qquad \leq\frac{1}{2}\cdot\sum_{i\in[k]} \Prob_{\mathcal{B}_{n,\vec{p}}}[\exists~j\neq i~:~F_{B_n}(d_i)=F_{B_n}(d_j)]\\
&\qquad = \sum_{x\leq n-1}\sum_{i\in[k]}\frac{1}{2}\cdot\Prob_{\mathcal{B}_{n,\vec{p}}}[F_{B_n}(d_i)=x,\exists~j\neq i:F_{B_n}(d_j)=x] \\
&\qquad = \sum_{i\in[k]}\sum_{x\leq n-1}\frac{1}{2}\cdot b(x;n,\alpha_i)\cdot \Prob_{\mathcal{B}_{n,\vec{p}}}[\exists~j\neq i:F_{B_n}(d_j)=x] \\
&\qquad \leq \sum_{i\in[k]}\left[\sum_{x\leq n-1}\frac{1}{2}\cdot b(x;n,\alpha_i)\right]\cdot \max_{x' \leq n-1}\Prob_{\mathcal{B}_{n,\vec{p}}}[\exists~j\neq i:F_{B_n}(d_j)=x'] \\
&\qquad \leq \sum_{i\in[k]}\frac{1}{2}\cdot (k-1)\cdot o(n^{-1/2})
=\binom{k}{2}o(n^{-1/2}). \\
\end{align*}

This proves the first inequality. For the second one, we note that
\begin{align*}
& \Prob[\text{all graphs are isomorphic}] \leq \Prob_{\mathcal{D}_{n,\vec{p}}}[\forall~i\neq j~:~F_{B_n}(d_i)=F_{B_n}(d_j)],
\end{align*}
and by \autoref{full-approx-2} it suffices to show that the probability of this event
under the $\mathcal{B}_{n,\vec{p}}$ model is $o(n^{-(k-1)/2})$.
This statement, in turn, holds since

\begin{align*}
& \Prob_{\mathcal{B}_{n,\vec{p}}}[\forall~i\neq j~:~F_{B_n}(d_i)=F_{B_n}(d_j)] \\
&\qquad \leq\sum_{x\leq n-1} \Prob_{\mathcal{B}_{n,\vec{p}}}[\forall~j\neq i~:~F_{B_n}(d_i)=x]\\
&\qquad \leq\sum_{x\leq n-1}\Prob_{\mathcal{B}_{n,\vec{p}}}[F_{B_n}(d_1)=x]\prod_{i\in[k]\setminus\{1\}}\Prob_{\mathcal{B}_{n,\vec{p}}}[F_{B_n}(d_i)=x] \\
&\qquad \leq\left[\sum_{x\leq n-1}\Prob_{\mathcal{B}_{n,\vec{p}}}[F_{B_n}(d_1)=x]\right]\max_{x'\leq n-1}\prod_{i\in[k]\setminus\{1\}}\Prob_{\mathcal{B}_{n,\vec{p}}}[F_{B_n}(d_i)=x'] \\
&\qquad \leq 1\cdot\prod_{i\in[k]\setminus\{1\}}\max_{x'\leq n-1}\Prob_{\mathcal{B}_{n,\vec{p}}}[F_{B_n}(d_i)=x'] = (o(n^{-1/2}))^k = o(n^{-(k-1)/2}).
\end{align*}
\end{proof}


\section{Final remarks} \label{remarks}

In this paper, we have considered the degree sequences of
$k$ independent Erdős-Rényi random graphs and
an approximation model in which such degrees are considered to be independent.
We have formally shown that any sequence of events in the approximation model
with probability smaller than a power law
will have this upper bound carried over to the original degree-sequence model.
It would be worthy of further analysis to determine whether this also holds
when $k$ is not a constant function of $n$.
We conjecture that it does as long as $k$ grows slowly enough, possibly any $k(n) = o(\log n)$.

\appendix
\section{Proof of \autoref{approx-steps-2}} \label{proof-of-approx-steps}

\begin{description}
\item[Statement~\ref{bp-ep2}]
Let $\mathcal{F}$ be the family of subsets of $E_n^k$
for which the statement's equality holds.
We will prove that (i) $\mathcal{F}$ contains all rectangles
(i.e., events of the form $R = R_1\times\cdots\times R_k$, with each $R_i\subset E_n$)
and (ii) $\mathcal{F}$ is a $\lambda$-system.
This is enough since, by Dynkin's theorem,
$\mathcal{F}$ must contain the $\sigma$-algebra generated by the rectangles,
which is the discrete $\sigma$-algebra over $E_n^k$.

For the first claim, for any rectangle $R = R_1\times\cdots\times R_k$ in $E_n^k$,
by \autoref{approx-steps-1}(\ref{bp-ep}), we have that
\begin{align*}
\Prob_{\mathcal{E}_{n,\vec{p}}}(R) &=
\Prob_{\mathcal{E}_{n,p_1}}\ox\cdots\ox\Prob_{\mathcal{E}_{n,p_k}}(R_1\times\cdots\times R_k) \\
&= \prod_{i\in[k]}\Prob_{\mathcal{E}_{n,p_i}}(R_i) \\
&= \prod_{i\in[k]}\frac{2\Prob_{\mathcal{B}_{n,p_i}}(R_i)}{1+(q_i-p_i)^{2N}} \\
&= \frac{2^k\prod_{i\in[k]}\Prob_{\mathcal{B}_{n,p_i}}(R_i)}{\prod_{i\in[k]}[1+(q_i-p_i)^{2N}]} \\
&= \frac{2^k\Prob_{\mathcal{B}_{n,\vec{p}}}(R)}{\prod_{i\in[k]}[1+(q_i-p_i)^{2N}]}.
\end{align*}
Therefore, $\mathcal{F}$ contains all rectangles.

For the second claim, note that $\mathcal{F}$ contains $E_n^k$,
since it is a rectangle;
$\mathcal{F}$ is closed by complements,
since for any $A\in\mathcal{F}$, it holds that
\begin{align*}
\Prob_{\mathcal{E}_{n,\vec{p}}}(E_n^k\setminus A) 
&= \Prob_{\mathcal{E}_{n,\vec{p}}}(E_n^k) - \Prob_{\mathcal{E}_{n,\vec{p}}}(A) \\
&= \frac{2^k\Prob_{\mathcal{B}_{n,\vec{p}}}(E_n^k)}
{\prod_{i\in[k]}[1+(q_i-p_i)^{2N}]}
- \frac{2^k\Prob_{\mathcal{B}_{n,\vec{p}}}(A)}
{\prod_{i\in[k]}[1+(q_i-p_i)^{2N}]} \\
&= \frac{2^k\Prob_{\mathcal{B}_{n,\vec{p}}}(E_n^k\setminus A)}
{\prod_{i\in[k]}[1+(q_i-p_i)^{2N}]}, \\
\end{align*}
and $E_n^k\setminus A\in\mathcal{F}$;
and $\mathcal{F}$ is also closed by disjoint enumerable unions,
since for any sequence $B_1,B_2,\ldots$ in $\mathcal{F}$,
if $B_1,B_2,\ldots$ are disjoint, then
\begin{align*}
\Prob_{\mathcal{E}_{n,\vec{p}}}\left(\biguplus_{j=1}^\infty B_j\right)
&=\sum_{j=1}^\infty\Prob_{\mathcal{E}_{n,\vec{p}}}(B_j)\\
&= \sum_{j=1}^\infty\frac{2^k\Prob_{\mathcal{B}_{n,\vec{p}}}(B_j)}
{\prod_{i\in[k]}[1+(q_i-p_i)^{2N}]} \\
&= \frac{2^k\sum_{j=1}^\infty\Prob_{\mathcal{B}_{n,\vec{p}}}(B_j)}
{\prod_{i\in[k]}[1+(q_i-p_i)^{2N}]} \\
&= \frac{2^k\Prob_{\mathcal{B}_{n,\vec{p}}}(\uplus_{j=1}^\infty B_j)}
{\prod_{i\in[k]}[1+(q_i-p_i)^{2N}]}, \\
\end{align*}
and $\uplus_{i=1}^\infty B_i\in\mathcal{F}$.
Since $\mathcal{F}$ satisfies the three requirements,
by definition, $\mathcal{F}$ is a $\lambda$-system.

\newpage
\item[Statement~\ref{ep-ip2}]
We follow the same strategy as in statement~\ref{bp-ep2}.
Let $\mathcal{G}$ be the family of subsets of $E_n^k$
for which the statement's equality is true.
First, take a rectangle $R = R_1\times\cdots\times R_k$ in $E_n^k$.
Using \autoref{approx-steps-1}(\ref{ep-ip}) yields
\begin{align*}
\Prob_{\mathcal{I}_{n,\vec{p}}}(R)
&= \Prob_{\mathcal{I}_{n,p_1}}\ox\cdots\ox\Prob_{\mathcal{I}_{n,p_k}}(R_1\times\cdots\times R_k) \\
&= \prod_{i\in[k]}\Prob_{\mathcal{I}_{n,p_i}}(R_i) \\
&= \prod_{i\in[k]}\frac{1}{V_{n,p_i}}\int_0^1
\phi\left(x_i;p_i,\frac{p_iq_i}{2N}\right)\Prob_{\mathcal{E}_{n,x_i}}(R_i)\dd x_i\\
&= \left[\prod_{i\in[k]}\frac{1}{V_{n,p_i}}\right]\int_{[0,1]^k}
\left[\prod_{i\in[k]}\phi\left(x_i;p_i,\frac{p_iq_i}{2N}\right)\prod_{i\in[k]}\Prob_{\mathcal{E}_{n,x_i}}(R_i)\right]\dd \vec{x}\\\\
&= \frac{1}{\prod_{i\in[k]}V_{n,p_i}}\int_{[0,1]^k}
\prod_{i\in[k]}\phi\left(x_i;p_i,\frac{p_iq_i}{2N}\right)
\Prob_{\mathcal{E}_{n,\vec{x}}}(R)~\dd \vec{x},
\end{align*}
which means $\mathcal{G}$ contains all rectangles, since $R$ was arbitrary.

Secondly, $\mathcal{G}$ satisfies the three requirements
of the definition of $\lambda$-systems:
it contains $E_n^k$, since it is a rectangle;
it is closed under complements, since for any $A\in\mathcal{G}$,
\begin{align*}
&\Prob_{\mathcal{I}_{n,\vec{p}}}(E_n^k\setminus A) \\
&\qquad= \Prob_{\mathcal{I}_{n,\vec{p}}}(E_n^k) - \Prob_{\mathcal{I}_{n,\vec{p}}}(A) \\
&\qquad= \frac{1}{\prod_{i\in[k]}V_{n,p_i}}
\int_{[0,1]^k}\prod_{i\in[k]}\phi\left(x_i;p_i,\frac{p_iq_i}{2N}\right)
\Prob_{\mathcal{E}_{n,\vec{x}}}(E_n^k)\dd\vec{x} \\
&\qquad\quad-\frac{1}{\prod_{i\in[k]}V_{n,p_i}}
\int_{[0,1]^k}\prod_{i\in[k]}\phi\left(x_i;p_i,\frac{p_iq_i}{2N}\right)
\Prob_{\mathcal{E}_{n,\vec{x}}}(A)\dd\vec{x} \\
&\qquad= \frac{1}{\prod_{i\in[k]}V_{n,p_i}}
\int_{[0,1]^k}\prod_{i\in[k]}\phi\left(x_i;p_i,\frac{p_iq_i}{2N}\right)
[\Prob_{\mathcal{E}_{n,\vec{x}}}(E_n^k)-\Prob_{\mathcal{E}_{n,\vec{x}}}(A)]\dd\vec{x} \\
&\qquad= \frac{1}{\prod_{i\in[k]}V_{n,p_i}}
\int_{[0,1]^k}\prod_{i\in[k]}\phi\left(x_i;p_i,\frac{p_iq_i}{2N}\right)
\Prob_{\mathcal{E}_{n,\vec{x}}}(E_n^k\setminus A)\dd\vec{x} \\
\end{align*}
and $E_n^k\setminus A\in\mathcal{G}$;
and $\mathcal{G}$ is also closed by disjoint enumerable unions,
since for any sequence $B_1,B_2,\ldots$ in $\mathcal{G}$,
if $B_1,B_2,\ldots$ are disjoint, then
\begin{align*}
&\Prob_{\mathcal{I}_{n,\vec{p}}}\left(\biguplus_{j=1}^\infty B_j\right) \\
&\qquad= \sum_{j=1}^\infty\Prob_{\mathcal{I}_{n,\vec{p}}}(B_j) \\
&\qquad= \sum_{j=1}^\infty\frac{1}{\prod_{i\in[k]}V_{n,p_i}}
\int_{[0,1]^k}\prod_{i\in[k]}\phi\left(x_i;p_i,\frac{p_iq_i}{2N}\right)
\Prob_{\mathcal{E}_{n,\vec{x}}}(B_j)\dd\vec{x} \\
&\qquad= \frac{1}{\prod_{i\in[k]}V_{n,p_i}}
\int_{[0,1]^k}\prod_{i\in[k]}\phi\left(x_i;p_i,\frac{p_iq_i}{2N}\right)
\left[\sum_{j=1}^\infty\Prob_{\mathcal{E}_{n,\vec{x}}}(B_j)\right]\dd\vec{x} \\
&\qquad= \frac{1}{\prod_{i\in[k]}V_{n,p_i}}
\int_{[0,1]^k}\prod_{i\in[k]}\phi\left(x_i;p_i,\frac{p_iq_i}{2N}\right)
\Prob_{\mathcal{E}_{n,\vec{x}}}(\uplus_{j=1}^\infty B_j)\dd\vec{x} \\
\end{align*}
and $\uplus_{i=1}^\infty B_i\in\mathcal{G}$.
Since $\mathcal{G}$ is a $\lambda$-system
and contains all rectangles, by Dynkin's theorem,
it must also contain the $\sigma$-algebra generated by the rectangles,
which is the discrete $\sigma$-algebra over $E_n$.

\item[Statement~\ref{ip-e'p2}]
Take $\vec{d} = (d_1,\ldots,d_k)\in E_n^k$ satisfying
$|M(d_i)-2Np_i|\leq 2y\sqrt{p_iq_iN}$ for each $i\in[k]$.
Using \autoref{approx-steps-1}(\ref{ip-e'p}) we can write
\begin{align*}
\Prob_{\mathcal{I}_{n,\vec{p}}}(\vec{d})
&= \prod_{i=1}^k\Prob_{\mathcal{I}_{n,p_i}}(d_i) \\
&= \prod_{i=1}^k \Prob_{\mathcal{E}_{n,p_i}'}(d_i)
\left(1+O\left(\frac{1+|y|^3}{\sqrt{p_iq_iN}}\right)\right).
\end{align*}

Note that the inequality from \autoref{approx-steps-1}(\ref{ip-e'p})
was applied $k$ times, once for each $\mathcal{I}_{n,p_i}$.
Since each inequality is uniform in its respective domain ---
$\{d_i\in E_n~:~|M(d_i)-2Np_i|\leq 2y\sqrt{p_iq_iN}\}$ ---,
the resulting inequality is uniform in the set
$\{\vec{d}\in E_n^k~:~|M(d_i)-2Np_i|\leq 2y\sqrt{p_iq_iN}~\forall i\}$.
Algebraic manipulations yield
\begin{align*}
\Prob_{\mathcal{I}_{n,\vec{p}}}(\vec{d})
&= \prod_{i\in[k]}\Prob_{\mathcal{E}_{n,p_i}'}(d_i)
\left(1+O\left(\frac{1+|y|^3}{\sqrt{p_iq_iN}}\right)\right) \\
&= \Prob_{\mathcal{E}_{n,\vec{p}}'}(\vec{d})\prod_{i\in[k]}
\left(1+O\left(\frac{1+|y|^3}{\sqrt{p_iq_iN}}\right)\right).
\end{align*}

Now, for each $i\in[k]$, $y = o(\sqrt[6]{p_iq_iN})$ implies
$(1+|y|^3)/\sqrt{p_iq_iN} = o(1)$.
Therefore, in expanding the product in the last expression,
the first-order terms dominate all higher-order terms.
This yields:
$$\Prob_{\mathcal{I}_{n,\vec{p}}}(\vec{d}) = \Prob_{\mathcal{E}_{n,\vec{p}}'}(\vec{d})
\left(1+\sum_{i\in[k]} O\left(\frac{1+|y|^3}{\sqrt{p_iq_iN}}\right)\right),$$
which is the desired result.

\item[Statement~\ref{e'p-dp2}]
This proof will follow by construction.
Under the stated assumptions for $p_i$,
there exist sets $R_{p_i}(n), R'_{p_i}(n)\subseteq E_n$ and a real function $\delta_i(n)$
satisfying the conditions of \autoref{approx-steps-1}(\ref{e'p-dp}),
with $(\delta_i)_{d_i}$ for each $d_i\in R_{p_i}(n)$ in condition (b).
Note that the functions $\delta_i(n)$ are positive real functions and
will not necessarily be equal for equal arguments.

Now, take:
\begin{align*}
S_{\vec{p}}(n) &= R_{p_1}(n)\times\cdots\times R_{p_k}(n), \\
S'_{\vec{p}}(n) &= R'_{p_1}(n)\times\cdots\times R'_{p_k}(n), \\
\varepsilon(n) &= \sum_{i\in[k]}\delta_i(n).
\end{align*}

We will show the desired results hold
for $S_{p,p'}$, $S'_{p,p'}$ and $\varepsilon$,
using properties of $R_{p_i}, R'_{p_i}$ and $\delta_i$
thoroughly in the next steps:

\begin{enumerate}
\item[\ref{e'p-dp2-1}.]
Note that
\begin{align*}
\Prob_{\mathcal{D}_{n,\vec{p}}}(S_{\vec{p}}(n)) &=
\prod_{i\in[k]}\Prob_{\mathcal{D}_{n,p_i}}(R_{p_i}(n)) \\
&= \prod_{i\in[k]}(1-n^{-\omega(n)}) \\
&= 1-n^{-\omega(n)},
\end{align*}
and similarly for $\Prob_{\mathcal{D}_{n,\vec{p}}}(S'_{\vec{p}}(n))$.

\item[\ref{e'p-dp2-2}.] Note that, for any $\vec{d} = (d_1,\ldots,d_k)\in S'_{\vec{p}}$,
it holds that $d_i\in R'_{p_i}(n)$ for each $i\in[k]$, each of which imply
$(\gamma_2)_i = \lambda_i(1-\lambda_i)(1+o(1))$.

\item[\ref{e'p-dp2-3}.]
By construction, for any $\vec{d} = (d_1,\cdots,d_k)\in S_{\vec{p}}(n)$,
it holds that $d_i\in R_{p_i}(n)$ for each $i\in[k]$.
Therefore, taking $\varepsilon_{\vec{d}} = \sum_{i\in[k]}(\delta_i)_{d_i}$,
it holds that $|\varepsilon_{\vec{d}}| \leq \sum_{i\in[k]}|(\delta_i)_{d_i}|
\leq \sum_{i\in[k]}\delta_i(n)=\varepsilon(n)$, and
\begin{align*}
\frac{\Prob_{\mathcal{D}_{n,\vec{p}}}(\vec{d})}
{\Prob_{\mathcal{E}'_{n,\vec{p}}}(\vec{d})}
&=\frac{\prod_{i\in[k]}\Prob_{\mathcal{D}_{n,p_i}}(d_i)}
{\prod_{i\in[k]}\Prob_{\mathcal{E}'_{n,p_i}}(d_i)} \\
&=\prod_{i\in[k]}\exp\left\{\frac{1}{4}\left(1-\frac{(\gamma_2)_i^2}{\lambda_i^2(1-\lambda_i)^2}\right)
\right\}\cdot\exp\{(\delta_i)_{d_i}\} \\
&=\exp\left\{\frac{1}{4}\left(k-\sum_{i\in[k]}\frac{(\gamma_2)_i^2}{\lambda_i^2(1-\lambda_i)^2}
\right)\right\}
\cdot\exp\{\varepsilon_{\vec{d}}\}
\end{align*}

\end{enumerate}
\end{description}
\qed

\bibliographystyle{imsart-nameyear}
\bibliography{degree_seq}

\end{document}